\newtheorem{Thm}{Theorem}[section]
\newtheorem{Prop}[Thm]{Proposition}
\newtheorem{Lem}[Thm]{Lemma}
\newtheorem{Def}[Thm]{Definition}
\newtheorem{Rem}[Thm]{Remark}
\newtheorem{Ex}[Thm]{Example}
\newenvironment{altproof}[1]
{\noindent
{\em Proof of {#1}}.}
{\nopagebreak\mbox{}\hfill $\Box$\par\addvspace{0.5cm}}
\newcommand\mytop[2]{\genfrac{}{}{0pt}{}{#1}{#2}}
\newcommand\set[1]{\left\{\,#1\,\right\}}  
\newcommand\abs[1]{\left|#1\right|} 
\newcommand{\ska}[1]{\left\langle#1\right\rangle} 
\newcommand\norm[1]{\left\Vert#1\right\Vert} 
\newcommand\Sym{\text {Sym}}
\DeclareMathOperator{\Kern}{Kern}
\DeclareMathOperator{\Range}{Range}
\newcommand{\eps}{\varepsilon}
\def\deg{\mathrm{deg}}
\def\sdeg{\text{\rm $S^1$-deg$^\nabla$}}
\def\span{\mathrm{span}}
\def\opint{\mathrm{int}}
\def\clos{\mathrm{clos}}
\def\id{\mathrm{id}}
\def\Z{\mathbb{Z}}
\def\N{\mathbb{N}}
\def\R{\mathbb{R}}
\def\C{\mathbb{C}}
\def\sign{\mathrm{sign}}
\def\polhk#1{\setbox0=\hbox{#1}{\ooalign{\hidewidth
   \lower1.5ex\hbox{`}\hidewidth\crcr\unhbox0}}}
\newcommand{\cB}{{\mathcal B}}
\newcommand{\cC}{{\mathcal C}}
\newcommand{\cD}{{\mathcal D}}
\newcommand{\cF}{{\mathcal F}}
\newcommand{\cO}{{\mathcal O}}
\newcommand{\cS}{{\mathcal S}}
\newcommand{\cU}{{\mathcal U}}
\newcommand{\fJ}{{\mathfrak J}}
\newcommand{\fK}{{\mathfrak K}}
\newcommand{\al}{\alpha}
\newcommand{\ga}{\gamma}
\newcommand{\de}{\delta}
\newcommand{\om}{\omega}
\newcommand{\si}{\sigma}
\newcommand{\Ga}{\Gamma}
\newcommand{\De}{\Delta}
\newcommand{\La}{\Lambda}
\newcommand{\Om}{\Omega}
\newcommand{\Si}{\Sigma}
\newcommand{\pa}{\partial}
\def\id{\mathrm{id}}
\newcommand{\wh}{\widehat}
\newcommand{\ov}{\overline}
\newcommand{\beq[1]}{\begin{equation}\label{eq:#1}}
\newcommand{\eeq}{\end{equation}}
\numberwithin{equation}{section}
\DeclareMathOperator{\dist}{dist}
\begin{document}

\title{Global continua of periodic solutions of singular first-order Hamiltonian systems of N-vortex type}
\author{Thomas Bartsch \and Bj\"orn Gebhard}
\date{}
\maketitle

\begin{abstract}
The paper deals with singular first order Hamiltonian systems of the form
\[
\Ga_k\dot{z}_k(t)=J\nabla_{z_k} H\big(z(t)\big),\quad z_k(t)\in\Om\subset\R^2,\ k=1,\dots,N,
\]
where $J\in\R^{2\times2}$ defines the standard symplectic structure in $\R^2$, and the Hamiltonian $H$ is of $N$-vortex type:
\[
H(z_1,\dots,z_N)
 = -\frac1{2\pi}\sum_{\mytop{j,k=1}{j\ne k}}^N \Ga_j\Ga_k\log\abs{z_j-z_k} - F(z).
\]
This is defined on the configuration space $\{(z_1,\ldots,z_N)\in \Om^{2N}:z_j\neq z_k\text{ for }j\neq k\}$ of $N$ different points in the domain $\Om\subset\R^2$. The function $F:\Om^N\to\R$ may have additional singularities near the boundary of $\Om^N$. We prove the existence of a global continuum of periodic solutions $z(t)=(z_1(t),\dots,z_N(t))\in\Om^N$ that emanates, after introducing a suitable singular limit scaling, from a relative equilibrium $Z(t)\in\R^{2N}$ of the $N$-vortex problem in the whole plane (where $F=0$). Examples for $Z$ include Thomson's vortex configurations, or equilateral triangle solutions. The domain $\Om$ need not be simply connected. A special feature is that the associated action integral is not defined on an open subset of the space of $2\pi$-periodic $H^{1/2}$ functions, the natural form domain for first order Hamiltonian systems. This is a consequence of the singular character of the Hamiltonian. Our main tool in the proof is a degree for $S^1$-equivariant gradient maps that we adapt to this class of potential operators.
\end{abstract}

{\bf MSC 2010:} Primary: 37J45; Secondary: 37N10, 76B47

{\bf Key words:} vortex dynamics; singular first order Hamiltonian systems; periodic solutions; global continua; equivariant degree theory

\section{Introduction}\label{sec:intro}
We consider Hamiltonian systems
\beq[Om1]
\Ga_k\dot{z}_k = J\nabla_{z_k}H(z),\quad k=1,\ldots,N,
\eeq
for $N$ point vortices $z_1(t),\dots,z_N(t)$ in a domain $\Om\subset\R^2$. Here
$
J=\begin{pmatrix}
0&1\\
-1 & 0
\end{pmatrix}
$
is the standard symplectic matrix in $\R^2$, and $\Ga_1,\dots,\Ga_N\in\R\setminus\{0\}$ are fixed vortex strengths. The Hamiltonian is singular and of the form
\beq[ham]
H(z)=-\frac{1}{2\pi}\sum_{\mytop{j,k=1}{j\ne k}}^N\Ga_j\Ga_k\log\abs{z_j-z_k} - F(z)
\eeq
with $F:\Om^N\to\R$ of class $\cC^2$. $H$ is defined on the configuration space
\[
\cF_N(\Om)=\{(z_1,\ldots,z_N)\in \Om^N:z_j\neq z_k\text{ for }j\neq k\}.
\]

Hamiltonian systems of this form appear in a variety of singular limit problems from mathematical physics. The classical point vortex problem from fluid dynamics goes back to Kirchhoff \cite{Kirchhoff:1876}. In the fluid dynamics context equation \eqref{eq:Om1} is derived from the Euler equations for an ideal fluid in $\Om$ when the vorticity is concentrated in vortex blobs $B_\de(z_k)$, $k=1,\dots,N$, and one passes to the singular limit $\de\to0$. Kirchhoff considered the case of the plane $\Om=\R^2$ and derived the Hamiltonian
\[
H_{0}(z)=-\frac{1}{2\pi}\sum_{\mytop{j,k=1}{j\ne k}}^N\Ga_j\Ga_k\log\abs{z_j-z_k},
\]
often called the Kirchhoff-Onsager functional. If $\Om\ne\R^2$ boundary effects play a role and the regular part $g:\Om\times\Om\to\R$ of a hydrodynamic Green's function (see \cite{Crowdy-Marshall:2005,Flucher-Gustafsson:1997}) in $\Om$ enters into the definition of the Hamiltonian:
\[
F(z)=\sum_{j,k=1}^N\Ga_j\Ga_kg(z_j,z_k).
\]
This has been derived by Routh \cite{Routh:1881} and C.C.~Lin \cite{Lin:1941_1,Lin:1941_2}, the Hamiltonian is then called Kirchhoff-Routh path function. For modern presentations of the point vortex method in fluid dynamics we refer to \cite{Marchioro-Pulvirenti:1994, Majda-Bertozzi:2001, Newton:2001, Saffmann:1995}. We would like to mention that in the present paper we allow more general nonlinearities $F$ which is relevant also for other applications.

Another motivation for considering \eqref{eq:Om1} arises in models of superconductivity. There one considers functions $u^\eps:\Om\times(0,\infty)\to\C$ solving the Ginzburg-Landau-Schr\"odinger (or Gross-Pitaevskii) equation
\[
iu^\eps_t-\De u^\eps =  \frac{1}{\eps^2} u^\eps(1-|u^\eps|^2).
\]
In this context a vortex is an isolated zero of $u^\eps$. In the limit $\eps\to0$ these vortices move according to \eqref{eq:Om1} provided the associated Ginzburg-Landau energy remains small. The number $\Ga_k$ is, up to a multiple, the Brouwer index of the zero $z_k$ of $u^\eps(\,.\,,t)$. In this context the Hamiltonian is the renormalized energy defined in \cite{Bethuel-etal:1994}; see \cite{Colliander-Jerrard:1998,Colliander-Jerrard:1999,Lin-Xin:1998} for more details. For problems on surfaces see \cite{Chen-Sternberg:2014,Gelantalis-Sternberg:2012}.

Still another motivation is the Landau-Lifshitz-Gilbert equation
\[
\frac{\pa{\bf m}}{\pa t} + {\bf m}\times\left(\De{\bf m}-\frac{m_3}{\eps^2}{\bf e}_3\right)
 + \alpha_\eps{\bf m}\times\left({\bf m}\times\left(\De{\bf m}-\frac{m_3}{\eps^2}{\bf e}_3\right)\right) = 0
\]
modeling the dynamics of a magnetic vortex system in a thin ferromagnetic film. The magnetization is given by a normalized vector field ${\bf m}:\Om\times(0,\infty)\to S^2$; $\eps$ is a material constant, $\al_\eps>0$ is a dimensionless damping constant, and ${\bf e}_3=(0,0,1)$. The magnetic vorticity is given by
\[
\om({\bf m}) = \left\langle{\bf m},\frac{\pa{\bf m}}{\pa x_1}\times\frac{\pa{\bf m}}{\pa x_2}\right\rangle.
\]
In the limit $\eps\to0$ with $\al_\eps\log\frac1\eps\to0$ the motion of point vortices is again given by a singular Hamiltonian system of the form \eqref{eq:Om1} with a Hamiltonian as in \eqref{eq:ham}; see \cite{Kurzke-etal:2011} and the references cited therein.

Due to the significance of \eqref{eq:Om1}-\eqref{eq:ham} many authors have investigated its dynamics, in particular for $\Om=\R^2$ or $\Om=S^2$. For domains with non-empty boundary much less is known, except in special cases like $\Om$ being a half-plane or radially symmetric (disc, annulus). In these cases the Green's function is explicitly known. For a general domain even the existence of equilibria is difficult to prove or disprove. Recent results on equilibria can be found in \cite{Bartsch-Pistoia:2015, Bartsch-Pistoia-Weth:2010, delPino-Kowalczyk-Musso:2005, Kuhl:2015, Kuhl:2016}. We would like to mention that these results do not give any information on the dynamics near an equilibrium. In particular it is not known whether the generalized Weinstein-Moser theorem \cite{Bartsch:1997} can be applied in order to find periodic solutions near the equilibrium. Concerning periodic solutions of \eqref{eq:Om1} in a general domain with boundary the only result we are aware of deals with the case $\Ga_1=\dots=\Ga_N$. In \cite{Bartsch-Dai:2016} the existence of a family $z^{(r)}=\left(z^{(r)}_1,\dots,z^{(r)}_N\right)$, $0<r<r_0$,  of periodic solutions with period $T_r$ has been proved. All vortices $z^{(r)}_k(t)$ rotate around a point $a_r\in\Om$ that is close to a critical point of the Robin function $h(a)=g(a,a)$, and they lie approximately on the vertices of a regular $N$-gon of distance $r$ from $a_r$. After a suitable scaling they look like Thomson's vortex configurations.

In the present paper we continue our investigations on periodic solutions of \eqref{eq:Om1}-\eqref{eq:ham} and generalize and improve the result from \cite{Bartsch-Dai:2016} significantly. First of all we deal with general vorticities $\Ga_k$, in particular they may be different and may have different signs. We start with a periodic relative equilibrium solution
of the vortex problem in the plane, i.e.\ a solution of
\beq[R2]
\Ga_k\dot{z}_k = J\nabla_{z_k}H_0(z),\quad k=1,\ldots,N,
\eeq
that rotates with frequency $\om$ around the origin and keeps its shape. Such solutions are also called vortex crystals and have been investigated by many authors. We refer the reader to \cite{Aref-etal:2003,Lewis-Ratiu:1996} for explicit examples, Thomson's vortex configuration being one of the simplest and best known ones. Then we give a criterion so that $Z$ generates a family $z^{(r)}$ of solutions of \eqref{eq:Om1}-\eqref{eq:ham} that look like $Z$, after a suitable singular limit scaling. Moreover, we show that these solutions lie on a global continuum of periodic solutions. This requires different methods than those used in this context before.

Normalizing the period to $2\pi$ by introducing a parameter $r>0$ that corresponds to the period, the solutions will be obtained as critical points of the action integral
\[
\fJ(r,u) = \fJ_r(u) =
 \frac12\int_0^{2\pi}\sum_{k=1}^N\ska{\Ga_k\dot{u}_k,Ju_k}_{\R^2}\:dt - \int_0^{2\pi}H_r(u)\:dt.
\]
The form domain of the quadratic form
\[
Q(z) = \int_0^{2\pi}\sum_{k=1}^N\ska{\Ga_k\dot{u}_k,Ju_k}_{\R^2}\:dt
\]
is the space $H^{1/2}=H^{1/2}(\R/2\pi\Z,\R^{2N})$. However, the functional $\int_0^{2\pi}H_r(u)\:dt$ is not defined on that space because $H_r$ inherits the singular behavior from $H$, and because $H^{1/2}$ does not embed into the space of continuous functions. Therefore the condition $u_j(t)\ne u_k(t)$ for $j\ne k$ does not define an open subset of $H^{1/2}$. There are a few other papers on singular first order Hamiltonian systems, most notably \cite{Carminati-Sere-Tanaka:2006,Tanaka:1996}. However in these papers the Hamiltonian is a variation of the $N$-body Hamiltonian from celestial mechanics and has a very different type of singular behavior. Moreover assumptions of "strong force" type are made, so that the Palais-Smale condition holds.

In order to find critical points of $\fJ_r$ we shall not apply methods from critical point theory. This seems to be hopeless at the moment because we cannot control the behavior of the Hamiltonian near the boundary of $\cF_N(\Om)$. In fact, $H(z)$ may approach any value in $\R\cup\set{-\infty,+\infty}$ as $z\to\pa\cF_N(\Om)$. As a consequence we do not see any kind of linking structure that leads to Palais-Smale sequences. Moreover, the functional $\fJ_r$ does not satisfy the Palais-Smale condition. Therefore instead of variational arguments we develop a variation of the degree theory for $S^1$-equivariant potential operators due to Rybicki \cite{Rybicki:1994}. His extension of this degree to strongly indefinite functionals in \cite{Rybicki:2001} cannot be used here because for our singular Hamiltonians the action functional $\fJ$ is not defined on the form domain of the quadratic form $Q$. In fact, we shall work on $H^1$ instead of $H^{1/2}$. It would be very interesting to see whether Floer type methods can be applied. We believe that our equivariant degree is especially useful for singular first order Hamiltonian systems.

The paper is organized as follows. After stating our results in the next section we introduce our degree in Section~\ref{sec:deg}. The following sections \ref{sec:setting}-\ref{sec:proof} contain the proof of our main theorem, the heart of it being the calculation of the degree in section~\ref{sec:deg-comp}. Finally in the last section \ref{sec:ex} we present some concrete examples of vortex crystals for which our main theorem holds.

\section{Statement of results}\label{sec:results}
Let $\Ga_1,\ldots,\Ga_N\in\R\setminus\{0\}$ be given vorticities, $N\geq 2$, let $\Om\subset \R^2$ be a domain and let $g:\Om\times\Om\to\R$ be a symmetric $\cC^2$-function. We consider the $N$-vortex type Hamiltonian
\[
H_\Om(z)
 = -\frac{1}{2\pi}\sum_{\mytop{j,k=1}{j\ne k}}\Ga_j\Ga_k\log\abs{z_j-z_k}
     - \sum_{j,k=1}^N\Ga_j\Ga_kg(z_j,z_k).
\]
which is defined on $\cF_N(\Om):=\{(z_1,\ldots,z_N)\in \Om^N:z_j\neq z_k\text{ for }j\neq k\}$. If $g$ is the regular part of a hydrodynamic Green's function then we arrive at the classical $N$-vortex Hamiltonian in the domain $\Om$.

In order to write the equation in a more compact way we introduce the vorticity matrix
\[
M_\Ga =
\begin{pmatrix}
\Ga_1&&\\
&\ddots&\\
&& \Ga_N
\end{pmatrix}\otimes E_2
=\begin{pmatrix}
\Ga_1E_2&&\\
&\ddots&\\
&& \Ga_NE_2
\end{pmatrix}
\in\R^{2N\times 2N}
\]
and the symplectic matrix
$J_N = E_N\otimes J\in \R^{2N\times 2N}$, where $E_m\in\R^{m\times m}$ is the identity matrix. We want to find periodic solutions $z:\R\to\cF_N(\Om)$ of
\beq[Om]
M_\Ga\dot{z} = J_N\nabla H_\Om (z).
\eeq

Recall the definition $h(z)=g(z,z)$ of the ``Robin'' function $h:\Om\to\R$. A critical point $a\in\Om$ of $h$ is said to be stable if it is isolated and has non-vanishing Brouwer index, i.e.\ the Brouwer degree $\deg(\nabla h,B_\eps(a),0)\ne0$ for $\eps>0$ small. A periodic relative equilibrium solution of \eqref{eq:R2} with center of vorticity at $0$ has the form
\beq[rel-equilib]
Z(t)=e^{-\om J_Nt}z,\quad \om\in\R\setminus\{0\},\ z\in\cF_N(\R^2).
\eeq
Such a relative equilibrium $Z$ is called non-degenerate, if the linearized system
\beq[R2lin]
M_\Gamma\dot{w} = J_N\big(H_0''(Z(t))\big)w
\eeq
possesses exactly three linearly independent $\frac{2\pi}{\abs{\om}}$-periodic solutions. This is the minimal positive dimension due to the invariance of $H_0$ under translations and rotations.
Observe that $Z$ as in \eqref{eq:rel-equilib} is a non-degenerate $\frac{2\pi}{\abs{\om}}$-periodic equilibrium if and only if $Z_\om(t):=\sqrt{|\om|}Z(t/|\om|)$ is a non-degenerate $2\pi$-periodic equilibrium. We can therefore assume that $Z$ is $2\pi$-periodic, i.e. $\abs{\om}=1$.

We write $X = H^1(\R/2\pi\Z,\R^{2N})$ for the Hilbert space of $2\pi$-periodic absolutely continuous functions $u:\R\to\R^{2N}$ with (locally) square integrable derivative. The standard scalar product in $X$ is
\[
\ska{u,v}_X
 = \int_0^{2\pi}\ska{u(t),v(t)}_{\R^{2N}}+\ska{\dot{u}(t),\dot{v}(t)}_{\R^{2N}}\:dt.
\]
For $u\in X$ and $\theta\in S^1=\R/2\pi\Z$ we define $\theta*u\in X$ by $\theta*u(t) := u(t+\theta)$. This defines a continuous representation of the group $S^1$ on $X$. For $a\in\R^2$ we set $\wh{a}:=(a,\dots,a)\in\R^{2N}$. We also need the subspace $D:=\{\wh{a}:a\in\R^2\}\subset\R^{2N}\subset X$ and the orthogonal projection $P_D:X\to D$.

\begin{Thm}\label{thm:main1}
  Let $Z$ be a non-degenerate $2\pi$-periodic relative equilibrium solution of \eqref{eq:R2} with center of vorticity at $0$, and let $a_0\in\Om$ be a critical point of $h$. If $a_0$ is stable, and if the total vorticity $\sum_{k=1}^N\Ga_k\ne0$, then there exists a connected $S^1$-invariant set $\cC=\cC(a_0,Z)\subset\R^+\times X$ with the following properties.
  \begin{itemize}
  \item[a)] If $(r,u)\in\cC$ then $z(t):=\wh{a_0}+ru(t/r^2)$ is a $2\pi r^2$-periodic solution of \eqref{eq:Om}.
  \item[b)] There exists $r_0>0$ and an $S^1$-invariant neighborhood $\cU\subset(0,r_0]\times X$ of
     $(0,r_0]\times S^1*Z$ such that:
     \[
     (r_n,u_n)\in\cC\cap\cU,\ r_n\to0,\ u_n=P_D[u_n]+v_n
      \quad\Longrightarrow\quad r_n P_D[u_n]\to0,\ S^1*v_n\to S^1*Z.
     \]
  \item[c)] For every $r\in(0,r_0]$ there exists an element $(r,u^{(r)})\in\cC^-:=\cC\cap\cU$.
  \item[d)] For $\cC^+:=\clos\big(\cC\setminus\cC^-\big)$ at least one of the following holds:
     \begin{itemize}
     \item[(i)] $\cC^+$ is unbounded.
     \item[(ii)] There exist sequences $(r_n,u_n)\in\cC^+$ and $t_n\in[0,2\pi]$ with $r_n$ bounded away from $0$ and $\wh{a_0}+r_nu_n(t_n/r_n^2)\to\pa\cF_N(\Om)$.
     \item[(iii)] There exists a sequence $(r_n,u_n)\in\cC^+$ with $r_n\to0$ and $u_n=P_D[u_n]+v_n$ is such that $r_nP_D[u_n]$ is bounded away from $0$ or $S^1*v_n$ is bounded away from $S^1*Z$.
     \end{itemize}
  \item[e)] If $a_0$ is a non-degenerate critical point of $h$, then there exists a $\cC^1$ function $(0,r_0]\ni r\mapsto u^{(r)}\in X$ such that $\cC^-=S^1*\operatorname{Graph}(u^{(.)})$.
\end{itemize}
\end{Thm}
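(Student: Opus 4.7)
The plan is to reformulate \eqref{eq:Om} via the rescaling in (a) as a one-parameter family of critical-point equations on $X = H^1(\R/2\pi\Z,\R^{2N})$, compute an $S^1$-equivariant gradient degree (as set up in Section~\ref{sec:deg}) on an isolating neighborhood of $S^1*Z$, and extract the global continuum by a Rabinowitz-type continuation argument. Substituting $z(t) = \wh{a_0} + ru(t/r^2)$ into \eqref{eq:Om} and using $\log|ru_j - ru_k| = \log r + \log|u_j - u_k|$ (the additive constant disappearing under differentiation), one arrives at a rescaled system $M_\Ga \dot u = J_N\nabla H_r(u)$ on $[0, 2\pi]$ with $H_r(u) = H_0(u) + r^2 G(r, u)$ for a $C^1$ family $G$. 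Periodic solutions correspond to critical points of $\fJ_r$ on the open subset $\cV_r\subset X$ where $u_j(t) \ne u_k(t)$ and $\wh{a_0} + ru$ stays in $\cF_N(\Om)$; this subset is open thanks to $H^1 \hookrightarrow C^0$. Since $r$ enters only through the regular part, whose gradient in $X$ is compact, $\nabla\fJ_r$ has the form $L + K_r$ with $L$ a fixed linear operator and $K_r$ compact and $S^1$-equivariant under the time-shift action, which brings the problem within reach of a Rybicki-type gradient degree on $X = H^1$ and sidesteps the form-domain obstruction noted in the introduction.

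At $r = 0$, $Z$ is a critical point of $\fJ_0$, and its critical set near $Z$ is the two-parameter family $S^1*Z + D$, where $D = \{\wh a : a \in \R^2\}$ reflects the spatial translation invariance of $H_0$. The non-degeneracy hypothesis on $Z$ says precisely that the kernel of the linearisation of $\nabla\fJ_0$ at $Z$ equals $T_Z(S^1*Z) \oplus D$. For $r > 0$ the $D$-symmetry is broken by $G$: expanding $g(\wh{a_0}+ru)$ to order $r^2$ and using $\nabla h(a_0) = 0$, the leading $D$-component of the reduced bifurcation equation is proportional to $\nabla h$ evaluated near $a_0$, with a prefactor that is nonzero precisely because $\sum_k\Ga_k \ne 0$.

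\textbf{Main step.} Choose a small $S^1$-invariant tubular neighborhood $\cU$ of $S^1 * Z + (D \cap B_\eps(0))$ isolating $S^1 * Z + D$ from all other critical orbits of $\fJ_0$ and lying inside $\cV_r$ for $r \in (0, r_0]$. Via a Liapunov-Schmidt reduction orthogonal to $T_Z(S^1*Z) \oplus D$, the $S^1$-equivariant gradient degree $\sdeg(\nabla_u\fJ_r, \cU)$ factors as the product of a spectral contribution from the hyperbolic complement (a nonzero integer computable from the linearization \eqref{eq:R2lin} and the non-degeneracy of $Z$) and the Brouwer degree $\deg(\nabla h, B_\eps(a_0), 0)$ of the reduced $D$-equation. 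Stability of $a_0$ guarantees the latter is nonzero, so $\sdeg(\nabla_u\fJ_r,\cU) \ne 0$. This is the main obstacle: it requires building an $S^1$-equivariant gradient degree with the reduction, excision and homotopy invariance needed to implement the above factorisation along the non-compact $D$-direction, and controlling the non-smallness of the perturbation $r^2 G$ in the $X$-norm; the full computation is performed in Section~\ref{sec:deg-comp}.

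With a nonzero equivariant gradient degree in hand, a Rabinowitz-type global continuation argument in the equivariant gradient category yields a connected $S^1$-invariant set $\cC \subset (0,\infty)\times X$ of solutions branching from $(0, S^1*Z)$ for which the trichotomy in (d) is exactly the list of ways $\cC$ can leave $\cU$. Part (a) is immediate from the rescaling; (b) follows from the local behavior enforced by the choice of $\cU$ together with the fact that in the $D$-direction the continuum is pinned to the isolated zero of $\nabla h$ at $a_0$ by the stability hypothesis; (c) holds because for each $r \in (0,r_0]$ the nonzero equivariant degree produces at least one critical orbit in $\cU$. For (e), non-degeneracy of $a_0$ makes $\nabla^2 h(a_0)$ invertible, whence the $D$-component of the bifurcation equation has a simple zero at each small $r > 0$, and the equivariant implicit function theorem applied to a slice transverse to $S^1 * Z$ yields the $C^1$-parametrisation $r \mapsto u^{(r)}$.
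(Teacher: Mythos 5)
Your proposal follows essentially the same route as the paper: the rescaling $z=\wh{a_0}+ru(\cdot/r^2)$, an $S^1$-equivariant gradient degree on $X=H^1$ for maps $L-\Psi$ with $(L+P_0)^{-1}\circ\Psi$ compact, a homotopy plus the multiplicativity property that splits the degree into a $D^\perp$-factor (nonzero by the non-degeneracy of the orbit $S^1*Z$) times a $D$-factor equal to $\deg(\nabla h,B_{\eps_0}(a_0),0)$, a Whyburn/Rabinowitz-type continuation theorem, and the implicit function theorem for part (e). The one point needing care is the geometry of the isolating set: its radius in the $D$-direction must grow like $\eps/r$ (so that the $D$-boundary corresponds to $\abs{rP_D[u]-a_0}=\eps$, where $\nabla h\ne0$); a tube of fixed size in the $u$-variable would not be admissible, which is why the theorem controls only $r P_D[u_n]$ and not $P_D[u_n]$ itself.
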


\begin{Rem}\label{rem:main1}\rm
  a) Clearly any non-degenerate critical point of $h$ and any isolated local minimum or maximum is stable. As a consequence of \cite[Theorem~I.4.6, Theorem~II.3.1]{Chang:1993} an isolated critical point $a$ of $h$ is stable if and only if it has non-vanishing critical groups $H_*(h^c,h^c\setminus\{a\})$. Here $c=h(a)$, $h^c=\{z\in\Om:h(z)\le c\}$ is the usual sublevel set, and $H_*$ denotes any kind of homology theory.

  b) In a bounded domain a hydrodynamic Robin function satisfies $h(z)\to\infty$ as $z\to\pa\Om$, hence the minimum is achieved. If $\Om$ is bounded and convex then $h$ is strictly convex and has a unique (local and global) minimum which is nondegenerate; see \cite[Theorem 3.1]{Caffarelli-Friedman:1985}. There are domains with an arbitrarily large number of critical points of $h$, even simply connected ones. In \cite{Micheletti-Pistoia:2014} it is proved for the Dirichlet Green function in a generic domain that critical points of $h$ are non-degenerate.

  c) Using a rotating coordinate frame it is easy to see that a periodic relative equilibrium $Z=e^{-\om J_Nt}z$ as in \eqref{eq:rel-equilib} is non-degenerate if and only if
  \beq[R2lin-rot]
  \dot{w} = J_N\left(M_\Ga^{-1}H_{0}''(z)+\om\cdot\id\right)w
  \eeq
  possesses exactly three linearly independent $\frac{2\pi}{\abs{\om}}$-periodic solutions.

  d) The local part of the theorem can be extended to the case where $\Om$ is an open subset of a two-dimensional surface. An extension of the global result that takes the topology of the surface into account is an interesting open problem.
\end{Rem}

\begin{Ex}\label{ex:2vortices}\rm
  Two vortices with vorticities $\Ga_1,\Ga_2\neq 0$ and such that $\Ga_1+\Ga_2\neq 0$ rotate rigidly around their center of vorticity. Such an equilibrium is always non degenerate; for details see Example~\ref{ex:N=2} below.
\end{Ex}

\begin{Ex}\label{ex:3vortices}\rm
  Three vortices with vorticities $\Ga_1,\Ga_2,\Ga_3\ne 0$ placed on the edges of an equilateral triangle form a relative equilibrium. It is non-degenerate provided the total vortex angular momentum $L=\Ga_1\Ga_2+\Ga_1\Ga_3+\Ga_2\Ga_3$ and the total vorticity $\Ga=\Ga_1+\Ga_2+\Ga_3$ satisfy
  \[
  \Ga\neq 0,\quad L\neq 0\quad \text{and}\quad L\neq \Ga_1^2+\Ga_2^2+\Ga_3^2.
  \]
  This will be proved in Example~\ref{ex:N=3}.
\end{Ex}

Notice that the conditions of Example~\ref{ex:3vortices} do not hold in the important special case of three identical vortices. In order to treat this case we need a refinement of our main theorem including symmetries. The symmetric group $\Si_N$ on $N$ symbols $\set{1,\dots,N}$ acts isometrically on $\R^{2N}$ via permutation of components, i.e.\
\[
\si*z = (z_{\si^{-1}(1)},\ldots,z_{\si^{-1}(N)}),\quad \si\in\Si_N, z\in\R^{2N}.
\]
Together with the action of $S^1$ on $X$ we obtain an action of $\Si_N\times S^1$ on $X$ given by
\[
(\si,\theta)*u(t) := (u_{\si^{-1}(1)}(\,\cdot\,+\theta),\ldots,u_{\si^{-1}(N)}(\,\cdot\,+\theta)),\quad
\theta\in S^1,\ \si\in\Si_N,\ u\in X.
\]
If some of the vorticities $\Ga_1,\ldots,\Ga_N$ are equal, then the Hamiltonians $H_{0}$ and $H_\Omega$ are invariant under the action of a subgroup of $\Si_N$. This additional symmetry can also be found in some solutions of \eqref{eq:R2} and \eqref{eq:Om}. We set
$$
\Sym(\Ga)=\set{\si\in\Si_N: M_\Ga(\si*z) = \si*(M_\Ga z)\text{ for all }z\in\R^{2N}}
$$
and consider the symmetry group
$$
\Sym(H)
 = \set{\ga=(\si,\theta)\in\Sym(\Ga)\times S^1:\theta\in\frac{2\pi}{\text{ord}(\si)}\Z}.
$$
Given $\ga\in\Sym(H)$ we look for solutions in the space
\[
X^\ga := \set{u\in X: \ga*u=u}.
\]

\begin{Def}\rm
  Let $\ga\in\Sym(H)$. A relative equilibrium solution $Z=e^{\pm J_Nt}z\in X^\ga$ of \eqref{eq:R2} as in \eqref{eq:rel-equilib} is said to be $\ga$-non-degenerate, if the space
  $\set{w\in X^\ga: w\text{ solves \eqref{eq:R2lin}}}$
  has dimension three.
\end{Def}

\begin{Ex}\label{ex:NGon}\rm
  Thomson's point vortex configuration, i.e.\ the relative equilibrium consisting of $N$ identical vortices placed at the edges of a regular $N$-gon, is a $\ga$-non-degenerate solution of \eqref{eq:R2}, where $\ga=((1~2~\ldots~N),\frac{2\pi}{N})\in\Si_N\times S^1$; see Example~\ref{ex:NGon-2} below.
\end{Ex}

\begin{Thm}\label{thm:main2}
  Let $\ga\in\Sym(H)$, let $Z\in X^\ga$ be a $\ga$-non-degenerate relative equilibrium of \eqref{eq:R2} with center of vorticity at $0$, and let $a_0\in\Omega$ be a critical point of the Robin function $h$. If $a_0$ is stable, and if the total vorticity $\sum_{k=1}^N\Ga_k\ne0$, then there exists an invariant continuum  $\cC_\ga=\cC_\ga(a_0,Z)\subset\R^+\times X^\ga$ of periodic solutions of \eqref{eq:Om} with the properties a)--e) of Theorem~\ref{thm:main1}.
\end{Thm}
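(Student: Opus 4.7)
The plan is to rerun the proof of Theorem~\ref{thm:main1} verbatim inside the fixed-point subspace $X^\ga\subset X$, appealing to a principle of symmetric criticality. Three structural facts make this work. First, since $\ga\in\Sym(H)$, both $H_0$ and $H_\Om$ are $\ga$-invariant, so the rescaled Hamiltonian and the action functional $\fJ_r$ are invariant under the $\Sym(H)$-action on $X$; hence the $H^1$-gradient $\nabla\fJ_r$ is $\Sym(H)$-equivariant and leaves $X^\ga$ invariant. Second, the $S^1$-action on $X$ (time-translation) commutes with the $\Sym(H)$-action in the sense needed: if $u\in X^\ga$ and $\theta'\in S^1$, then writing $\ga=(\si,\theta)$ one has $\ga*(\theta'*u)=\theta'*(\ga*u)=\theta'*u$, so $X^\ga$ is a closed $S^1$-invariant subspace. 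Third, the diagonal $D=\set{\wh a:a\in\R^2}$ sits inside $X^\ga$ (since $\si\in\Sym(\Ga)$ permutes equal entries of $\wh a$ and constants are time-invariant), hence the decomposition $u=P_D[u]+v$ used in statement~(b) restricts to $X^\ga$.

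\textbf{Key steps.} First I would verify symmetric criticality: critical points of $\fJ_r|_{X^\ga}$ are critical points of $\fJ_r$ on $X$, which follows from the equivariance of $\nabla\fJ_r$. Second, I would repeat on $X^\ga$ the construction of the $S^1$-equivariant gradient degree from Section~\ref{sec:deg}. All of the inputs — the gradient character of the nonlinear part, the compactness properties of the Hamiltonian vector field pulled back through the scaling $z=\wh{a_0}+ru(t/r^2)$, and the splitting argument that separates the diagonal mode from the transverse modes — survive restriction to $X^\ga$ because each ingredient is $\Sym(H)$-equivariant. Third, I would recompute the local degree around $S^1*Z$ as in Section~\ref{sec:deg-comp}: the $a_0$-factor contributes the Brouwer index of $a_0$ as a critical point of $h$ (unchanged, since $\si$ fixes $D$ pointwise), and the $Z$-factor contributes a nonzero $S^1$-equivariant local index on the transverse slice, precisely because $\ga$-non-degeneracy says the kernel inside $X^\ga$ of the linearization along $Z$ has the minimal dimension three coming from translation and rotation symmetries. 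Stability of $a_0$ together with $\sum\Ga_k\ne0$ then makes the product index nonzero, so the local $S^1$-equivariant degree on $X^\ga$ is nontrivial.

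\textbf{Continuation and obstacle.} With a nonzero local equivariant degree the abstract global bifurcation theorem for $S^1$-gradient maps, applied now on $\R^+\times X^\ga$, produces a connected $S^1$-invariant continuum $\cC_\ga\subset\R^+\times X^\ga$ of periodic solutions through the relative equilibrium. Properties (a)--(d) are direct translations of the corresponding parts of Theorem~\ref{thm:main1} once one observes that the four Rabinowitz-type alternatives (unboundedness, boundary collision, loss of concentration at $a_0$, loss of profile $Z$) are formulated in $S^1$-invariant terms that make sense in any $S^1$-invariant ambient space. Part~(e) follows from an implicit function theorem argument carried out in $X^\ga$ when $a_0$ is a non-degenerate critical point of $h$, since both the Hessian of $h$ at $a_0$ and the linearized operator of the vortex equation at $Z$ are invertible on the relevant complements in $X^\ga$. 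The main point to get right is the third step: one must check that the product decomposition of the local degree in Section~\ref{sec:deg-comp} is compatible with the restriction $X\to X^\ga$, i.e.\ that the $D$-factor and the transverse $Z$-factor are each $\ga$-invariant and that the $\ga$-non-degeneracy hypothesis really isolates the relative equilibrium orbit $S^1*Z$ inside $X^\ga$ after modding out the translation/rotation symmetries — everything else is a faithful transcription of the non-equivariant argument.
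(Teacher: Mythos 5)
Your proposal is correct and follows essentially the same route as the paper: the authors develop the whole functional setting, the equivariant degree, the product computation of the local index (Brouwer index of $\nabla h$ at $a_0$ times the transverse index of the non-degenerate orbit $S^1*Z$ in $D^\perp\cap X^\ga$), the continuation theorem, and the implicit-function-theorem argument for part~(e) directly on $X^\ga$ for general $\ga\in\Sym(H)$, with Theorem~\ref{thm:main1} then obtained as the special case $\ga=(\id,0)$ rather than the other way around. The only point worth noting is that realizing your ``product decomposition'' of the local degree requires an explicit admissible homotopy (Lemma~\ref{lem:homotopy}) deforming $\Phi_r$ to the decoupled map $\big(v,\wh{b}\big)\mapsto\big(\Phi_0(v),rP_D[\nabla F(r\wh{b})]\big)$, but this is a technical elaboration of the splitting step you already identify, not a missing idea.
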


Note that Theorem \ref{thm:main1} is a special case of Theorem \ref{thm:main2} with $\ga=(\id,0)\in\Sym(H)$.

\begin{Rem}\rm
  a) Theorem~\ref{thm:main2} generalizes and improves \cite[Theorem~2.1]{Bartsch-Dai:2016}. In that paper for $Z$ only the case of Thomson's regular $N$-gon was considered. Moreover, since variational methods instead of degree methods were applied no connected continuum was found and the result was only local. The assumption in \cite[Theorem~2.1]{Bartsch-Dai:2016} that the critical groups of $h$ at $a_0$ are nontrivial is equivalent to our assumption that the Brouwer index of $\nabla h$ at $a_0$ is non-trivial; see Remark~\ref{rem:main1}~a).

  b) A very interesting and challenging problem consists in desingularizing the periodic solutions obtained for the point vortex problem to regular solutions of the partial differential equations mentioned above. In \cite{Bartsch-Pistoia:2015,Cao-Liu-Wei:2013,Cao-Liu-Wei:2014} equilibria of \eqref{eq:Om} have been desingularized in order to obtain stationary solutions of the Euler equations for an ideal fluid. Concerning periodic solutions we are only aware of the paper \cite{Gelantalis-Sternberg:2012} where a special periodic relative equilibrium of the point vortex problem on the two-dimensional sphere $S^2$ was desingularized to  obtain rotating solutions of the Gross-Pitaevskii equation on $S^2$.
\end{Rem}

\section{Degree theory for equivariant potential operators}\label{sec:deg}
In this section we generalize the construction of the degree for $S^1$-equivariant potential operators due to Rybicki~\cite{Rybicki:1994, Rybicki:2001}; see also \cite{Balanov-Krawcewicz-Steinlein:2006, Ize-Vignoli:1999} for a homotopy-theoretic approach. We use the following notation, and refer to \cite{Adams:1969} for basic representation theory. If $S^1=\R/2\pi\Z$ acts on a space $X$ we write $\theta*u$ for the action of $\theta\in S^1$ on $u\in X$. Given a closed subgroup $K\subset S^1$ the set of fixed points under $K$ is denoted by
$X^K=\set{u\in X: \theta*u=u\text{ for all }\theta\in K}$. The isotropy group of $u\in X$ is denoted by $I_u=\set{\theta\in S^1: \theta*u=u}$. The irreducible real representation where $\theta\in S^1$ acts on $\R^2$ via multiplication with $\begin{pmatrix}\cos(k\theta) & -\sin(k\theta)\\ \sin(k\theta) & \cos(k\theta)\end{pmatrix}$ is denoted by $\R^2[k]$. In the sequel all representations of $S^1$ are equipped with a scalar product that is preserved by the action of $S^1$.
For $k\ge1$ let $V_k\subset V$ be the isotypical component corresponding to $\R^2[k]$, and let $V_0=V^{S^1}$ be the fixed point set of the action. Then $V\cong\bigoplus_{k=0}^\infty V_k$, and all but finitely many of the $V_k$ are trivial. Moreover, $V_j$ and $V_k$ are orthogonal for $j\ne k$. By Schur's lemma an equivariant linear map $L:V\to V$ maps each $V_k$ to itself; we denote the restriction by $L_k:=L|_{V_k}:V_k\to V_k$. Observe that for $k\ge1$ there is a complex structure on $V_k$ such that the action of $\theta\in S^1$ is given by multiplication with $e^{k\theta i}$. For $v\in V\setminus V_0$ let $\tau(v)\in V$ be the unit tangent vector to the orbit $S^1*v$ at $v$ such that $\langle\tau(v),\frac{d}{d\theta}\theta*v\rangle>0$. If $v\in V_0^\perp$ and using the complex structure this is just $i\cdot \frac{v}{\abs{v}}$. For $v\in V_0$ we set $\tau(v)=0$.

For the convenience of the reader we now recall the basic properties of the degree for $S^1$-equivariant gradient maps in the finite-dimensional setting. Let $\cC^k_{S^1,\nabla}$ be the class of maps $f:(\ov{\cO},\pa\cO)\to (V,V\setminus\{0\})$, defined on the closure of an open, bounded, $S^1$-invariant subset $\cO\subset V$ of some finite-dimensional orthogonal representation $V$ of $S^1$, such that $f=\nabla F$ is the gradient of an $S^1$-invariant $C^{k+1}$-function $F:\cD\to \R$ with $\ov{\cO}\subset \cD\subset V$ open and $S^1$-invariant. For $f\in\cC^0_{S^1,\nabla}$ there exists a degree
\[
\sdeg(f,\cO)
 = \left(d_k^\nabla(f,\cO)\right)_{k\in\N_0} \in \bigoplus_{k=0}^\infty \Z
\]
with the following properties, \cite{Rybicki:1994,Rybicki:1997}:
\begin{itemize}
  \item[(D1)] (Existence) If $d_k^\nabla(f,\cO)\ne0$ for some $k\in\N_0$ then there exists $v\in \cO\cap V^K$ with $f(v)=0$ where $K=S^1$ if $k=0$, resp.\ $K=\Z_k=\Z/k\Z$ if $k\ge1$.
  \item[(D2)] (Excision and additivity) If $f^{-1}(0)\subset \cO_1\cup\cO_2$ for two disjoint open $S^1$-invariant subsets $\cO_1,\cO_2\subset\cO$ then
      \[
      \sdeg(f,\cO) = \sdeg(f,\cO_1)+\sdeg(f,\cO_2).
      \]
  \item[(D3)] (Homotopy) Let $\cU\subset [0,1]\times V$ be open and bounded, and let $h:(\ov{\cU},\pa\cU)\to(V,V\setminus\{0\})$ be continuous. If $h_t=h(t,\,.\,):\cU_t=\set{v\in V:(t,v)\in\cU}\rightarrow V$ lies in $\cC^0_{S^1,\nabla}$ for each $t\in[0,1]$, then $\sdeg(h_t,\cU_t)$ is independent of $t\in[0,1]$.
  \item[(D4)] (Multiplicativity) If $f_i:(\ov{\cO_i},\pa\cO_i)\to(V_i,V_i\setminus\{0\})$, $i=1,2$, are in $\cC^0_{S^1,\nabla}$ then so is
      $f_1\times f_2:\big(\ov{\cO_1}\times\ov{\cO_2},\pa(\ov{\cO_1}\times\ov{\cO_2})\big)\to V_1\times V_2$, and
      \[
      d_k^\nabla(f_1\times f_2,\ov{\cO_1}\times\ov{\cO_2}) =
       \begin{cases}
         d_0^\nabla(f_1,\cO_1)\cdot d_0^\nabla(f_2,\cO_2) & \mbox{if } k=0; \\
         d_k^\nabla(f_1,\cO_1)\cdot d_0^\nabla(f_2,\cO_2)+d_0^\nabla(f_1,\cO_1)\cdot d_k^\nabla(f_2,\cO_2) & \mbox{if } k\ge1.
       \end{cases}
      \]
\end{itemize}
Now we formulate some explicit computations of the degree.
\begin{itemize}
  \item[(D5)] If $L:V\to V$ is a linear $S^1$-equivariant and symmetric isomorphism then is the degree $\sdeg(L,V):=\sdeg(L,B_1(0))$ given by:
   \[
   d_k^\nabla(L,V) = \begin{cases}
                    \sign\det L & \mbox{if } k=0, \\
                    \frac12\sign\det L\cdot\mu_k & \mbox{if $k\ne0$ and $V_k\ne0$}, \\
                    0, & \mbox{otherwise.}
                  \end{cases}
   \]
  where $\mu_k$ is the Morse index of $L_k$.
\end{itemize}
The indices $\mu_k$ are even, since each $L_k$ is symmetric and $S^1$-equivariant. Also observe that $\sign\det L=1$ if $V_0=0$ and that $\sign\det L=\sign\det L_0$ if $V_0\ne0$. If $v\in\cO^{S^1}$ is a non-degenerate zero of $f\in\cC^1_{S^1,\nabla}$ then $\sdeg(f,B_\eps(v))=\sdeg(Df(v),B_1(0))$ for $\eps>0$ small. It follows that $d_0^\nabla(f,\cO)=\deg(f,\cO^{S^1},0)$ is the Brouwer degree of $f$ constrained to the set of fixed points of the action of $S^1$. One can also formulate an explicit formula for the degree $\sdeg(f,B_\eps(S^1*v))$ if $S^1*v$ is a non-degenerate orbit of zeroes of $f\in\cC^1_{S^1,\nabla}$, and $v$ is not a fixed point of the action. Since this formula is a bit more complicated and since it is not needed in its full strength we only state the following fact:
\begin{itemize}
  \item[(D6)] Let $f:V\supset\cO\to V$ be in $\cC^1_{S^1,\nabla}$ with $f^{-1}(0)=S^1*v$, and $S^1*v$ is a non-degenerate orbit of zeroes of $f$ with isotropy group $I_v\cong\Z_k$. Then $|d_k^\nabla(f,\cO)|=1$.
\end{itemize}

Now we extend this degree to the infinite-dimensional setting. Let $X$ be a separable Hilbert space with an orthogonal action of the group $S^1$, i.e.\ there is a continuous homomorphism $R:S^1\to\cB(X)$ from $S^1$ into the space of bounded linear operators on $X$ such that each $R(\theta)$ is an orthogonal linear map. The action of $\theta\in S^1$ on $u\in X$ is denoted by $\theta*u:=R(\theta)[u]$. We want to define a degree theory for $S^1$-equivariant maps of the form $\Phi=L-\Psi:\La\to X$ where $L:X\to X$ is a bounded self-adjoint linear operator and $\Psi:\La\to X$ is the gradient of an $S^1$-invariant function defined on an open subset $\La\subset X$. The original extension from \cite{Rybicki:1994} dealt with the case $L=\id$ and $\Psi$ completely continuous. For applications to Hamiltonian systems Rybicki in \cite{Rybicki:2001} considered the case where $L$ is a Fredholm operator of index $0$ and the positive and negative eigenspaces are infinite-dimensional. This implies that $X$ is the form domain of the quadratic form $u\mapsto\ska{Lu,u}$. In our application, however, the functional does not have this property because $\Psi$ is not defined on (an open subset of) the form domain of the quadratic form.

We consider the following class of operators. Let $L\in\cB(X)$ be a bounded, self-adjoint linear operator on $X$. We assume that there is a Hilbert space decomposition
\[
X = \clos\left(\bigoplus_{k\in{\N_0}} E_k\right),\quad E_j\perp E_k\text{ for }j\ne k,
\]
such that the following conditions hold.
\begin{itemize}
  \item[(A1)] $E_k$ is a finite-dimensional, $S^1$-invariant linear subspace of $X$, and the isotropy group of $u\in E_{k}\setminus\{0\}$ is $\Z_k$ for $k\in\N$.
\end{itemize}
Thus $E_k$ is the isotypical component of $E_0^\perp$ corresponding to $\R^2[k]$. For $n\in\N_0$ we set $X_n:=\bigoplus_{k=0}^n E_k$ and write $P_n:X\to X_n$ for the orthogonal projection, so that $P_n[u]\to u$ as $n\to\infty$ for every $u\in X$. The above decomposition is adapted to $L$ in the sense:
\begin{itemize}
  \item[(A2)] $E_0=\Kern(L)$, and for each $k\ne0$: $L(E_k)=E_k$. 
  \item[(A3)] The map $L+P_0$ defines an isomorphism $X\to Y$ onto a Hilbert space $Y\leq X$.
\end{itemize}
In our application $X=H^1(\R/2\pi\Z,\R^{2N})$, the spaces $E_k$ correspond to the $k$-th Fourier modes, $L$ is the $H^1$-gradient of the quadratic form $\frac12\int_0^{2\pi}\langle M_\Ga\dot{u},J_Nu\rangle$ on $X$, and $Y=H^2(\R/2\pi\Z,\R^{2N})$. Recall that the form domain of this quadratic form is $H^{1/2}(\R/2\pi\Z,\R^{2N})$.

Concerning the nonlinear map $\Psi$ we assume:
\begin{itemize}
  \item[(A4)] $\Psi:\ov{\cO}\rightarrow X$ is the gradient of an $S^1$-invariant $C^1$-function $\fK:\Lambda\rightarrow \R$ restricted to the closure of an open, bounded and invariant set $\ov{\cO}\subset\Lambda$.
  \item[(A5)] The image of $\Psi$ is contained in $Y$ and the set $(L+P_0)^{-1}\circ\Psi\big(\ov{\cO}\big)$ is relatively compact in $X$.
\end{itemize}

\begin{Lem}\label{lem:deg-1}
  If (A1)-(A5) hold, and if the equation $Lu-\Psi(u)=0$ does not have a solution $u\in\pa\cO$ then there exists $n_0\in\N$ such that the equation $Lu-P_{n_0}\Psi(u)-t\left(P_n-P_{n_0}\right)\Psi(u)=0$ does not have a solution $u\in X_n\cap\pa\cO$ for $n\ge n_0$, $t\in[0,1]$.
\end{Lem}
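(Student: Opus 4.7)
The plan is a proof by contradiction exploiting the compactness from (A5). Suppose no such $n_0$ exists. Then for each $m\in\N$ I obtain indices $n_m\ge m$, a parameter $t_m\in[0,1]$ and an element $u_m\in X_{n_m}\cap\pa\cO$ satisfying
\[
Lu_m \;=\; (1-t_m)P_m\Psi(u_m) + t_m P_{n_m}\Psi(u_m),
\]
and in particular $n_m\to\infty$. The aim is to extract a subsequence $u_m\to u\in\pa\cO$ with $\Psi(u_m)\to\Psi(u)$, so that $u$ solves $Lu-\Psi(u)=0$ on $\pa\cO$, contradicting the hypothesis.

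First I would use (A5) to obtain strong compactness of $\Psi(u_m)$. The elements $v_m:=(L+P_0)^{-1}\Psi(u_m)$ lie in the relatively compact set given by (A5), so along a subsequence $v_m\to v$ in $X$. Applying the bounded operator $L+P_0:X\to Y$ from (A3) yields $\Psi(u_m)=(L+P_0)v_m\to\Psi_0:=(L+P_0)v$ in $Y$, hence also in $X$.

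Next I would pass to the limit in the displayed equation. Set $Q_m:=(1-t_m)P_m+t_m P_{n_m}$, a convex combination of the $X$-orthogonal projections onto the nested finite-dimensional subspaces $X_m\subset X_{n_m}$. Since $\|Q_m\|_{X\to X}\le 1$ and $P_n\to\id$ strongly on $X$, the decomposition
\[
Q_m\Psi(u_m)-\Psi_0 \;=\; Q_m\bigl(\Psi(u_m)-\Psi_0\bigr)+(Q_m-\id)\Psi_0
\]
forces $Q_m\Psi(u_m)\to\Psi_0$ in $X$; the same argument applied to the decomposition of $Y$ into the summands $E_k$ (each $E_k$ is finite-dimensional and contained in $Y$ by (A2)--(A3)) upgrades this to convergence in $Y$. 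Consequently $Lu_m=Q_m\Psi(u_m)\to\Psi_0$ in $Y$. Because $P_0 u_m$ is bounded in the finite-dimensional space $E_0$, a further subsequence satisfies $P_0 u_m\to w_0\in E_0$. Then $(L+P_0)u_m=Lu_m+P_0 u_m\to\Psi_0+w_0$ in $Y$, and the continuity of $(L+P_0)^{-1}:Y\to X$ from (A3) yields $u_m\to u:=(L+P_0)^{-1}(\Psi_0+w_0)$ in $X$.

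Finally, $u\in\pa\cO$ because $\pa\cO$ is closed, and the continuity of $\Psi=\nabla\fK$ on $\La$ from (A4) gives $\Psi(u_m)\to\Psi(u)$ in $X$, so $\Psi_0=\Psi(u)$. Passing to the limit in the displayed equation produces $Lu=\Psi(u)$, the desired contradiction. The main technical point I anticipate is the $Y$-convergence $Q_m\Psi(u_m)\to\Psi_0$ in the third paragraph, which requires that the family $\{P_n\}$ be uniformly bounded on $Y$ and converge strongly to the identity there. This is automatic once $Y$ is equipped with the Hilbert inner product pulled back from $X$ via $L+P_0$ (an admissible choice because any two Hilbert norms on $Y$ making $L+P_0$ an isomorphism are equivalent), under which each $P_n$ is itself $Y$-orthogonal.
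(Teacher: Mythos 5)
Your proof is correct and follows essentially the same route as the paper's: a contradiction argument that extracts convergence of $u_m$ from the relative compactness of $(L+P_0)^{-1}\Psi(\ov{\cO})$ in (A5) together with the fact that the projections $P_n$ commute with $L+P_0$ on the decomposition $\bigoplus E_k$. The only difference is cosmetic: you track convergence of $Q_m\Psi(u_m)$ in $Y$ (with the pulled-back inner product) and then invert, whereas the paper applies $(L+P_0)^{-1}$ to the equation first and uses $(L+P_0)^{-1}\circ P_k = P_k\circ(L+P_0)^{-1}$ directly in $X$ --- the same mechanism in a different order.
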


\begin{proof}
Arguing by contradiction, suppose there exist sequences $t_k\in[0,1]$ and $u_k\in X_{n_k}\cap\pa\cO$ with $n_k\ge k$ such that $Lu_k-P_k\Psi(u_k)-t_k\left(P_{n_k}-P_k\right)\Psi(u_k)=0$ for all $k\in\N$. Then
\[
u_k-(L+P_0)^{-1}[P_0u_k]-(L+P_0)^{-1}\left[P_k\Psi(u_k)+t_k\left(P_{n_k}-P_k\right)\Psi(u_k)\right] = 0
\quad\text{for all }k.
\]
After passing to subsequences we may assume that $t_k\to t\in[0,1]$, $P_0u_k\to v$ because $X_0=E_0$ is finite-dimensional, and $(L+P_0)^{-1}[\Psi(u_k)] \to w$ by (A5). Then
\[
(L+P_0)^{-1}[P_k\Psi(u_k)] = P_k\circ(L+P_0)^{-1}[\Psi(u_k)] \to w
\]
and similarly $(L+P_0)^{-1}\left[\left(P_{n_k}-P_k\right)\Psi(u_k)\right]\to0$. It follows that $u_k \to u:=v+w \in\pa\cO$, and $Lu-\Psi(u)=0$, a contradiction.
\end{proof}

Lemma~\ref{lem:deg-1} and (D2)--(D5) imply that
\[
\begin{aligned}
&\sdeg(L-P_n\Psi,\cO\cap X_n)+(d_0^\nabla(L-\Psi,\cO),0,0,\ldots) =\\
&\hspace{1cm}
  \sdeg(L-P_{n_0}\Psi,\cO\cap X_{n_0}) + d_0^\nabla(L-\Psi,\cO)\cdot\sdeg(L,X_n\cap (X_{n_0})^\perp).
\end{aligned}
\]
Recall that $d_0^\nabla(L-\Psi,\cO)=\deg(L-\Psi,\cO^{S^1},0)$ is the Brouwer degree of $L-\Psi$ constrained to the fixed point set. As a consequence of our discussion the number
\[
\sdeg(L-P_n\Psi,\cO\cap X_n)-\deg(L-\Psi,\cO^{S^1},0)\cdot\sdeg(L+P_0,X_n)
\]
is independent of $n\ge n_0$ with $n_0$ from Lemma~\ref{lem:deg-1}. Therefore we can define:

\begin{Def}\label{def:deg}\rm
  For a bounded, self-adjoint linear operator $L\in\cB(X)$ and $\Psi:\ov{\cO}\to X$ such that (A1)-(A5) hold the degree for $S^1$-equivariant gradient maps is defined as
  \[
  \sdeg(L-\Psi,\cO) = \left(d_k^\nabla(L-\Psi,\cO)\right)_{k\in\N_0}\in\bigoplus_{k=0}^\infty \Z,
  \]
  where $d_0^\nabla(L-\Psi,\cO)=\deg(L-\Psi,\cO^{S^1},0)$ and for $k\neq 0$
\[
	d_k^\nabla(L-\Psi,\cO)=
	\lim_{n\rightarrow\infty}\left(d_k^\nabla(L-P_n\Psi,\cO\cap X_n)-\deg(L-\Psi,\cO^{S^1},0)\cdot d_k^\nabla(L+P_0,X_n)\right).
\]
\end{Def}

It is a standard argument to prove that $\sdeg$ has the properties (D1)-(D4) with $V$ replaced by $X$ and $f$ replaced by $L-\Psi$ satisfying (A1)-(A5). The same is valid for property (D6) provided the non-degenerate orbit of zeroes $S^1*v$ is contained in a finite-dimensional subspace $X_n\leq X$.

\begin{Rem}\rm
  A somewhat different approach would be to pass from $L[u]-\Psi(u)=0$ to $F(u)=u-(L+P_0)^{-1}[\Psi(u)]=0$. Then $F$ is a compact perturbation of the identity but not a gradient. It is also not $S^1$-orthogonal in the sense of \cite{Rybicki:1994}, a generalization of gradient maps. Consequently the degree from \cite{Rybicki:1994} still cannot be used, and one needs to develop a new version.
\end{Rem}

Now we state a continuation theorem suitable for our application. We consider a family of equations of the form
\beq[par]
Lu-\Psi(r,u) = 0 \qquad (r,u)\in\cD\subset \R^+\times X.
\eeq
Here $S^1$ acts trivially on $\R$ and $L\in\cB(X)$ is a bounded, self-adjoint linear operator on $X$ as above. Concerning the nonlinear map $\Psi$ we require:
\begin{itemize}
 \item[(A6)] $\Psi:\cD\to X$ is defined on an open and invariant subset $\cD\subset \R^+\times X$, it is continuous, equivariant, and $\Psi(r,.)$ is the gradient of $\fK(r,\cdot)$, where $\fK:\cD\rightarrow \R$ is $S^1$-invariant, continuous and differentiable with respect to the $u$ component.
  \item[(A7)] The image of $\Psi$ is contained in $Y$. If $B\subset\R\times X$ is bounded, closed, and $B\subset\cD$, then the set $(L+P_0)^{-1}\circ\Psi(B)$ is relatively compact in $X$.
\end{itemize}

The set of solutions of \eqref{eq:par} will be denoted by $\cS:=\set{(r,u)\in\cD: Lu-\Psi(r,u)=0}$. Observe that if $B\subset\R\times X$ is $S^1$-invariant, closed, bounded and satisfies $B\subset\cD$ then $\cS\cap B$ is compact. This follows easily from (A7). For $M\subset\R^+\times X$ and $r\in\R^+$ we use the notation $M_r=\{u\in X:(r,u)\in M\}$.

\begin{Thm}\label{thm:cont}
  Suppose (A1)-(A3), (A6), (A7) hold, and suppose there exist $r_0>0$ and a relatively open, $S^1$-invariant subset $\cU\subset(0,r_0]\times X$ such that:
  \begin{itemize}
  \item[(i)] For every $r\in(0,r_0]$: $\cU_r\ne\emptyset$, bounded, $\overline{\cU}_r\subset\cD_r$.
  \item[(ii)] $\cS\cap \pa\cU=\emptyset$ where $\pa\cU$ is the relative boundary of $\cU$ in $(0,r_0]\times X$.
  \end{itemize}
  If $\sdeg(L-\Psi(r_0,\,.\,), \cU_{r_0})\ne0$ then there exists a connected component $\cC\subset\cS$ with the following properties:
   \begin{itemize}
  \item[a)] $(\cC\cap\cU)_r\ne\emptyset$ for every $r\in(0,r_0]$,
  \item[b)] $\cC\setminus \cU$ is not contained in a compact subset of $\cD$.
  \end{itemize}
  \end{Thm}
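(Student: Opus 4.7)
The argument combines three ingredients: invariance of $\sdeg(L-\Psi(r,\cdot),\cU_r)$ along $r$, slabwise compactness of $\cS$, and a Whyburn-type separation to extract a single connected component with both properties (a) and (b).

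\emph{Step 1 (Degree is constant in $r$).} By condition (ii), the map $h(r,u)=Lu-\Psi(r,u)$ has no zeros on the relative boundary of $\cU$ in $(0,r_0]\times X$. A parameter version of Lemma~\ref{lem:deg-1} (the same contradiction argument, now extracting in addition a convergent subsequence of $r_k\in[r_1,r_0]$) shows that, for $n$ sufficiently large, the Galerkin reductions $L-P_n\Psi(r,\cdot)$ entering Definition~\ref{def:deg} also have no zeros on $\pa\cU_r\cap X_n$, uniformly on compact sub-intervals. The finite-dimensional homotopy property (D3), combined with the passage to the limit built into Definition~\ref{def:deg}, makes $r\mapsto\sdeg(L-\Psi(r,\cdot),\cU_r)$ constant on $(0,r_0]$. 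By hypothesis this value is nonzero, so (D1) yields a solution in $\cU_r$ for every $r\in(0,r_0]$.

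\emph{Step 2 (Slabwise compactness).} For a sequence $(r_n,u_n)\in\cS$ with $u_n\in\overline{\cU}_{r_n}$ and $r_n\in[r_1,r_0]$, the identity $u_n=(L+P_0)^{-1}(P_0u_n+\Psi(r_n,u_n))$, together with the finite-dimensional range of $P_0$, condition (i), and (A7), yields a convergent subsequence. Hence $\cS\cap\overline{\cU\cap([r_1,r_0]\times X)}$ is compact for every $r_1>0$.

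\emph{Step 3 (Whyburn-type argument).} I argue by contradiction, assuming that no component of $\cS$ satisfies both (a) and (b). For each integer $n\ge 1$, set $r_n=r_0/n$ and consider the compact slab $K_n:=\cS\cap\overline{\cU\cap([r_n,r_0]\times X)}$. Using the classical fact that in a compact metric space two closed disjoint sets are joined by a connected subset unless there is a relatively clopen partition separating them, I would first produce a connected subset $\cC_n\subset K_n$ joining $\{r_0\}\times\cU_{r_0}$ to $\{r_n\}\times\cU_{r_n}$: if no such subset existed, the resulting partition of $K_n$ would, via excision and homotopy invariance of $\sdeg$, force $\sdeg(L-\Psi(r_0,\cdot),\cU_{r_0})=0$, contradicting Step 1. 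Passing to a Hausdorff limit along a diagonal subsequence of the nested $\cC_n$ produces a closed connected subset $\cC_*\subset\cS$ whose projection onto the $r$-axis contains $(0,r_0]$. Taking $\cC$ to be the ambient component of $\cS$ containing $\cC_*$, property (a) holds. If (b) failed, $\cC\setminus\cU$ would sit in a compact subset of $\cD$, so $\cC$ itself would be compact; inflating $\cC$ to an $S^1$-invariant open neighborhood $\widetilde\cU\subset\cD$ with slicewise bounded closure and no solutions on its relative boundary, and comparing $\sdeg(L-\Psi(r,\cdot),\widetilde\cU_r)$ with the degree on $\cU_r$ via excision at $r=r_0$ and at $r$ small, would again contradict Step 1.

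The principal obstacle lies in Step 3: $\cS$ is only slabwise compact, $\cU$ only slicewise bounded, and $\cD$ is not locally compact. Turning Whyburn's separation into a genuine degree contradiction requires consistently producing $S^1$-invariant, slicewise bounded open tubes around the candidate continua, together with a diagonal extraction of Hausdorff limits as $r_n\to 0$ that does not destroy connectedness.
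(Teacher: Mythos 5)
Your overall strategy (constancy of the degree in $r$, slabwise compactness from (A7), and a Whyburn-type connectivity argument) is the right one, and Steps 1 and 2 are sound; the paper uses exactly these facts. The genuine gaps are in Step 3, and they are the ones you flag yourself. First, the connected sets $\cC_n\subset K_n$ are \emph{not} nested: each $\cC_n$ is produced independently inside its own slab, and nothing forces $\cC_n\subset\cC_{n+1}$. A Hausdorff (Kuratowski) upper limit of non-nested connected sets is connected only inside a fixed compact ambient space, which you do not have since the slabs $K_n$ exhaust a non-compact set; so the passage to $\cC_*$ does not go through as written, and this is precisely the classical difficulty a global continuation theorem must overcome. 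Second, your derivation of (b) is flawed: if $\cC\setminus\cU$ were contained in a compact subset of $\cD$, it does \emph{not} follow that $\cC$ is compact, because by (a) the set $\cC\cap\cU$ contains points $(r,u)$ with $r\to0$, and these need have no limit point in $\cD$ --- in the intended application $\cU_r$ is bounded only by a constant of order $1/r$. Moreover (a) and (b) must hold for the \emph{same} component, which your two-stage argument (first build $\cC_*$, then test (b) on the ambient component) does not guarantee.

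The paper removes both obstacles with one device: it adjoins two ideal points $\infty_1,\infty_2$ to $\cD\setminus\pa\cU$, where neighborhoods of $\infty_1$ are $(\{\infty_1\}\cup\cU)\setminus\cD(1/n)$ and neighborhoods of $\infty_2$ are $(\{\infty_2\}\cup(\cD\setminus\ov{\cU}))\setminus\cD(1/n)$, with $\cD(\eps)$ the exhausting sets on which $\cS$ is compact. Then $\cS^*=\cS\cup\{\infty_1,\infty_2\}$ is a compact subspace, and Alexander's refinement of Whyburn's lemma reduces the whole problem to showing that $\infty_1$ and $\infty_2$ cannot be separated in $\cS^*$; a putative separation $V_1\ni\infty_1$, $V_2\ni\infty_2$ is excluded by exactly the excision and generalized homotopy computations of your Steps 1 and 3, which force $\sdeg(L-\Psi(r_0,\cdot),\cU_{r_0})=0$. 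A single connected set accumulating at both ideal points then delivers (a) (from $\infty_1$) and (b) (from $\infty_2$) simultaneously. You should either adopt this compactification or supply a correct replacement for the limit argument; as it stands, Step 3 is a gap rather than a proof.
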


\begin{proof}
We first add two points at infinity to the set $\cD\setminus\pa\cU$:
\[
\cD^*:=(\cD\setminus\pa\cU)\cup\set{\infty_1,\infty_2}.
\]
In order to define the topology of $\cD^*$ we set for $0<\eps<1$:
$$
\cD(\eps)=\set{(r,u)\in\cD : r\in[\eps,\eps^{-1}],\dist(u,\pa\cD_r)\geq \eps,\norm{u}\leq \eps^{-1}}.
$$
A neighborhood basis of $\infty_1$ is given by the family $(\{\infty_1\}\cup \cU)\setminus \cD(1/n)$, $n\in\N$, and a neighborhood basis of $\infty_2$ is given by $(\{\infty_2\}\cup (\cD\setminus\ov\cU))\setminus \cD(1/n)$, $n\in\N$. Then $\cD^*$ is a normal topological space, and $\cS^*:=\cS\cup\{\infty_1,\infty_2\}$ is a compact subspace of $\cD^*$. We need to prove that there exists a connected set $\cC\subset\cS$ such that $\infty_1,\infty_2\in\ov\cC\subset\cD^*$. According to \cite[Proposition 5]{Alexander:1981}, a refinement of Whyburn's lemma, it is sufficient to show that $\infty_1$ and $\infty_2$ are not separated in $\cS^*$. Arguing by contradiction suppose that there exist two open subsets $V_1,V_2\subset\cD^*$ such that $V_1\cap V_2=\emptyset$, $\infty_1\in V_1$, $\infty_2\in V_2$, and $\cS^*\subset V_1\cup V_2$. Then
\[
V_1 \subset \{\infty_1\}\cup\cU\cup\opint(\cD(\eps)) \quad\text{and}\quad
V_2 \subset \{\infty_2\}\cup\cD\setminus\clos(\cU\setminus\cD(\eps))
\]
for some $0<\eps<\min\{1,r_0\}$. It follows that
\[
\begin{aligned}
&\sdeg(L-\Psi(r_0,.),(V_1\cap \cU)_{r_0}) + \sdeg(L-\Psi(r_0,.),(V_1\setminus\ov{\cU})_{r_0})\\
 &\hspace{1cm}= \sdeg(L-\Psi(r_0,.),(V_1)_{r_0})
 = \sdeg(L-\Psi(1/\eps,.),(V_1)_{1/\eps}) = 0
\end{aligned}
 \]
and
\[
\sdeg(L-\Psi(r_0,.),(V_1\setminus\ov{\cU})_{r_0}) = \sdeg(L-\Psi(\eps,.),(V_1\setminus\ov{\cU})_{\eps}) = 0,
\]
hence
\[
\sdeg(L-\Psi(r_0,.),(V_1\cap \cU)_{r_0}) = 0.
\]
Moreover we have
\[
\sdeg(L-\Psi(r_0,.),(V_2\cap \cU)_{r_0}) = \sdeg(L-\Psi(\eps,.),(V_2\cap \cU)_{\eps}) = 0.
\]
This leads to the contradiction
\[
\begin{aligned}
0 &\ne\sdeg(L-\Psi(r_0,.),\cU_{r_0})\\
  &= \sdeg(L-\Psi(r_0,.),(V_1\cap \cU)_{r_0}) + \sdeg(L-\Psi(r_0,.),(V_2\cap \cU)_{r_0}) = 0.
\end{aligned}
\]
\end{proof}

\section{The functional setting}\label{sec:setting}
From now on we assume without loss of generality that $a_0=0$. We want to find solutions of \eqref{eq:Om} that are close to the solution $rZ(t/r^2)$ of \eqref{eq:R2} for $r>0$ small. Since $r=0$ is a singular limit for this ansatz we make a blow-up argument. Fixing $r>0$ and setting $u(s)=\frac1r z(r^2s)$ we see that $z$ solves $(HS)$ if and only if $u$ solves
\beq[Hr]
\quad \Ga_k\dot{u}_k = J\nabla_{u_k} H_r(u),\quad k=1,\dots,N,
\eeq
with
\[
H_r(u) := H(ru)+\frac{1}{2\pi}\sum_{j\neq k}\Ga_j\Ga_k\log r+F(0)
 = H_0(u)-F(ru)+F(0).
\]
Clearly $H_r(u)\to H_0(u)$ as $r\to 0$. The Hamiltonian $H_r$ is defined on
$$
\cO_r=\{u\in\R^{2N}: r u_k\in\Om, u_j\neq u_k\text{ for all } j\neq k\}.
$$
Observe that $\cO_r=\cF_N(\frac1r\Om)$ for $r>0$, and $\cO_0=\cF_N(\R^2)$.

Recall from Section~\ref{sec:results} the space $X = H^1(\R/2\pi\Z,\R^{2N})$ and the fixed point subspace $X^\ga := \set{u\in X: \ga*u=u}$ for $\ga\in\Sym(H)$. We shall seek $2\pi$-periodic solutions $u\in X^\ga$ of \eqref{eq:Hr}, corresponding to $2\pi r^2$-periodic solutions $z$ of \eqref{eq:Om}. Solutions of \eqref{eq:Hr} with period $2\pi$ are critical points of the corresponding action functional. In order to define this functional let
$$
\La = \set{(r,u)\in \R\times X: u(t)\in \cO_r\text{ for all }t\in\R },
$$
and, for $r\in\R$,
$$
\La_r = \set{u\in X:(r,u)\in\La}.
$$
Clearly $\La$ is an open subset of $\R\times X$, and $\La_r$ is open in $X$. Now the action functional $\fJ:\La\rightarrow\R$ is defined by
$$
\fJ(r,u) = \fJ_r(u) =
 \frac12\int_0^{2\pi}\ska{M_\Ga\dot{u},J_Nu}_{\R^{2N}}\:dt-\int_0^{2\pi}H_r(u)\:dt
$$
$\fJ$ is of class $\cC^2$ and critical points of $\fJ$ are solutions of \eqref{eq:Hr}. Observe that
\[
\fJ_r(u) = \fJ_0(u) - \int_0^{2\pi}F(ru)\:dt + 2\pi F(0).
\]
We want to show that the gradient $\Phi_r:=\nabla\fJ_r$ has the form suitable for our degree theory. The decomposition of $X$ is given by the Fourier modes, of course. For $k\in\Z$ we define
\[
B_k:\R\to SO(2N),\quad B_k(t):=e^{-kJ_Nt},
\]
and
\[
E_k := \set{B_k\al+B_{-k}\beta: \al,\beta\in\R^{2N}} \subset X.
\]
Observe that $\fJ_0(u)$ has the form
\beq[form-fJ]
\fJ_0(u) = \frac12\ska{Lu,u}-\int_0^{2\pi}H_0(u)\:dt
\eeq
with $L:X\to X$ given by $Lu = (\id-\De)^{-1}(-J_NM_\Ga\dot{u})$. Here $\De u = \ddot{u}$ defines an isomorphism $\De:H^{s+2}\cap(E_0)^\perp\to H^s\cap (E_0)^\perp$ for any $s\ge0$ where
\[
H^s=H^s(\R/2\pi\Z,\R^{2N})
 = \set{\sum_{k\in\Z}B_k\alpha_k\in L^2(\R/2\pi\Z,\R^{2N}): \sum_{k\in\Z}\abs{k}^{2s}\abs{\alpha_k}^2<\infty}.
\]
The operator $L\in\cB(X)$ is a bounded self-adjoint linear operator with range
\[
\Range(L) \subset Y = H^2 \Subset X = H^1.
\]
Clearly $E_0=\Kern(L)$, $L(E_k)=E_k$ for $k\ne0$, and $L+P_0$ defines an isomorphism $X \cong Y$ where $P_0:X\to E_0$ is the orthogonal projection.

The nonlinearity $\fK(r,u)=\fK_r(u)$ defined by
\[
\fK_r:\La_r\to\R,\quad \fK_r(u) = \int_0^{2\pi}H_r(u)\:dt = \int_0^{2\pi}H_0(u)\:dt+\int_0^{2\pi}F(ru)\:dt-2\pi F(0)
\]
is in $\cC^2(\La)$ and $\Psi_r:=\nabla\fK_r:\La_r\to X$ is given by
\[
\Psi_r(u)=(\id-\De)^{-1}[\nabla H_0(u)]+r(\id-\De)^{-1}[\nabla F(ru)]
\]
Note that $\Psi_r(u)\in H^3\Subset Y$, hence  $(L+P_0)^{-1}\circ\Psi$ maps bounded subsets of $\La$ that are also closed in $\R\times X$ to relatively compact subsets of $X$. Thus we see that $\Phi=\nabla_u\fJ = L-\Psi:\La\to X$ satisfies (A1)-(A3), (A6), (A7).

Next for $\ga\in\Sym(H)$ we set
\[
\La^\ga := \La\cap(\R\times X^\ga) \quad\text{and}\quad
\La_r^\ga := \La_r\cap X^\ga.
\]
Since $\Phi_r$ is equivariant with respect to $\ga$, it induces a restriction $\Phi^\ga:\La^\ga\to X^\ga$. Thus it remains to find a continuum $\cC^\ga=\cC^\ga(a_0,Z)\subset\La^\ga$ of solutions $(r,u)\in\La^\ga$ of the equation $\Phi^\ga_r(u)=0$ with the properties stated in Theorem~\ref{thm:main1}. This will be a consequence of the continuation theorem \ref{thm:cont}.

\section{A degree computation}\label{sec:deg-comp}
We fix $\ga\in \Sym(H)$ and a relative equilibrium $Z\in X^\ga$ of \eqref{eq:R2} as in \eqref{eq:rel-equilib} with minimal period $2\pi$ and assume that $Z$ is $\ga$-non-degenerate. We also assume that $a_0=0$ is a  stable critical point of the Robin function $h$. Recall the notation $\wh{a}=(a,\ldots,a)\in\R^{2N}$ for $a\in\R^2$ and the space $D:=\{\wh{a}:a\in\R^2\}\subset\R^{2N}\subset X^\ga$. The space $D\oplus\R\dot{Z}$ is a subset of the kernel of the linearization $D\Phi^\ga_0(Z)$ because $\Phi_0=\nabla\fJ_0$ and $\fJ_0$ is invariant under translations and under the action of $S^1$.
The $\ga$-nondegeneracy of $Z$ means that the differential $D\Phi^\ga_0(Z):X^\ga\to X^\ga$ has kernel $D\oplus\R\dot{Z}$. Since $(L+P_0)^{-1}\circ D\Phi_0(Z)=\id-P_0-(L+P_0)^{-1}\circ (\id-\De)^{-1}\circ H''_0(Z)$ is a compact perturbation of identity one sees that $Z$ is an isolated zero of $\Phi_0$ restricted to $N_Z:=(D\oplus \R\dot{Z})^\perp\subset X^\ga$. I.e. there exists $0<\de<\norm{Z}$ so that the following holds:
\beq[def-delta]
  u\in N_Z,\ \norm{u-Z}\le\de,\ \Phi_0(u)=0\qquad\Longrightarrow\qquad u=Z.
\eeq
Thus if
\[
N_\de:=S^1*\set{u\in N_Z:\norm{u-Z}<\de}\cap D^\perp=B_{\de}(S^1*Z)\cap D^\perp
\]
denotes the open $\de$-neighborhood of $S^1*Z$ in $D^\perp=\R\dot{Z}\oplus N_Z$, then $\Phi_0$ does not have zeroes in the closure of $N_\de$ except the orbit $S^1*Z$. There also exists $\eps_0>0$ so that $a_0=0$ is the only zero of $\nabla h:\Om\to\R$ in the closed disc $\ov{B_{\eps_0}(0)}\subset\Om$.

The main result of this section is the following proposition.

\begin{Prop}\label{prop:deg}
  Suppose $Z$ is $\ga$-non-degenerate and $a_0=0$ is an isolated zero of $\nabla h$ with index
  $\deg(\nabla h,B_{\eps_0}(0),0)\ne0$, and $\sum_{k=1}^N\Ga_k\neq 0$. Then there exists $r_0>0$ and there exists a relatively open, $S^1$-invariant subset $\cU\subset\La\cap\big((0,r_0]\times X^\ga\big)$ with the following properties:
  \begin{itemize}
  \item[(i)] $(0,r_0]\times (S^1*Z) \subset \cU \subset (0,r_0]\times\set{\wh{b}+v:\wh{b}\in D,\ v\in N_\de}$
  \item[(ii)] $\ov{\cU}\cap\big((0,r_0]\times X^\ga\big)\subset\La^\ga$
  \item[(iii)] $\Phi(r,u)\ne0$ if $(r,u)\in\pa\cU$ where $\pa\cU$ is the relative boundary of $\cU$ in $(0,r_0]\times X^\ga$.
  \item[(iv)] For any sequence $(r_n,u_n)\in\cU$ with $r_n\to0$ there holds $r_nP_D[u_n]\to0$.
  \item[(v)] For $0<r\le r_0$ the set $\cU_r:=\set{u\in X^\ga:(r,u)\in\cU}$ is bounded and $\sdeg(\Phi_r,\cU_r)\ne0$; more precisely there holds $d_1^\nabla(\Phi_r,\cU_r)\ne0$.
\end{itemize}
\end{Prop}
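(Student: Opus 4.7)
My plan is to define $\cU$ as a product-type neighborhood of $\{0\}\times S^1*Z$ in the splitting $X^\ga=D\oplus(D^\perp\cap X^\ga)$, verify properties (i)--(iv) by a leading-order expansion of $\Phi_r$ in $r$, and compute $d_1^\nabla(\Phi_r,\cU_r)$ by homotoping to a decoupled product map and invoking multiplicativity (D4). Concretely, I will take
\[
\cU_r:=\{u=\wh{b}+v\in X^\ga : |rb|<\eta(r),\ v\in N_\de\},
\]
where $\eta:(0,r_0]\to(0,\eps_0]$ is chosen with $\eta(r)\to 0$ and $c(\eta(r))/r\to\infty$ as $r\to 0$, using $c(\rho):=\min_{|a|=\rho}|\nabla h(a)|>0$ (positive for $\rho\in(0,\eps_0]$ by isolation of $a_0=0$). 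Since $D$ is $S^1$-fixed and $N_\de$ is $S^1$-invariant, $\cU$ is relatively open and $S^1$-invariant. Property (i) is immediate with $b=0$ and $v\in S^1*Z$, (iv) holds by construction of $\eta$, and (ii) holds for $r_0$ small since $\ov{N_\de}$ is bounded and $rv(t)$ is uniformly small so that $ru(t)$ stays in $\Om$ with distinct coordinates.

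The key analytic input is the leading-order splitting
\[
P_D\Phi_r(u)=-\frac{r\Ga^2}{N}\,\wh{\nabla h(rb)}+R_1,\qquad (\id-P_D)\Phi_r(u)=\Phi_0(v)+R_2,\qquad \Ga:=\sum_k\Ga_k,
\]
with $\|R_1\|=O(r^2)$ and $\|R_2\|=O(r)$ uniformly on $\ov\cU_r$, which follows from the translation invariance of $H_0$ (giving $\nabla H_0(\wh{b}+v)=\nabla H_0(v)$), Taylor expansion of $F$ around the diagonal $\wh{rb}$, and the identity $\nabla_{z_j}F(\wh{a})=\Ga_j\Ga\,\nabla h(a)$ (which stems from $F(\wh{a})=\Ga^2h(a)$). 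For (iii), on $\{|rb|=\eta(r)\}$ the first estimate gives $|P_D\Phi_r(u)|\ge\frac{r\Ga^2}{N}c(\eta(r))-Cr^2>0$ by the choice of $\eta$. On $\{v\in\pa N_\de\}$ I will argue by contradiction: any sequence $(r_n,u_n)\in\pa\cU_{r_n}$ with $\Phi_{r_n}(u_n)=0$ and $r_n\to 0$ would, via the compactness of $(L+P_0)^{-1}\circ\Psi$ (assumption (A7)), subconverge to $v_*\in D^\perp\cap X^\ga$ with $\Phi_0(v_*)=0$ and $\dist(v_*,S^1*Z)=\de$, contradicting \eqref{eq:def-delta}.

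For (v), at fixed $r\in(0,r_0]$ I homotope $\Phi_r$ to the split map $\Phi_{\mathrm{prod}}(u):=-\frac{r\Ga^2}{N}\,\wh{\nabla h(rb)}+\Phi_0(v)$ via $H(s,u):=\Phi_{\mathrm{prod}}(u)+(1-s)(R_1+R_2)$; the same estimates show $H(s,\cdot)\ne 0$ on $\pa\cU_r$ for all $s\in[0,1]$. Since $\cU_r=\cU_r^D\times N_\de$ with $\cU_r^D:=\{\wh{b}:|rb|<\eta(r)\}\subset D$, property (D4) applied to finite-dimensional approximations and passed to the limit via Definition~\ref{def:deg} yields
\[
d_1^\nabla(\Phi_r,\cU_r)=d_0^\nabla(\Phi_{\mathrm{prod}}^D,\cU_r^D)\cdot d_1^\nabla(\Phi_0,N_\de),
\]
since $d_1^\nabla(\Phi_{\mathrm{prod}}^D,\cU_r^D)=0$ ($D$ carries a trivial $S^1$-representation) and $d_0^\nabla(\Phi_0,N_\de)=0$ (for $\de$ small, $N_\de^{S^1}=\emptyset$ as $S^1*Z$ stays bounded away from the constants). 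By excision $d_0^\nabla(\Phi_{\mathrm{prod}}^D,\cU_r^D)=\deg(\nabla h,B_{\eps_0}(0),0)\ne 0$ by the stability hypothesis, while (D6)---applied to $S^1*Z\subset E_1\cap X^\ga$ (finite-dimensional, with trivial $S^1$-isotropy since $Z(t+\theta)=e^{-J_N\theta}Z(t)\ne Z(t)$ for $\theta\not\equiv 0$, and non-degenerate by $\ga$-non-degeneracy)---gives $|d_1^\nabla(\Phi_0,N_\de)|=1$. Hence $d_1^\nabla(\Phi_r,\cU_r)\ne 0$. The main obstacle will be the uniform non-vanishing on the non-compact boundary component $\{v\in\pa N_\de\}$, which forces a compactness/contradiction argument via the Fredholm structure of $\Phi_0$ in place of a direct lower bound on $\|\Phi_0\|$; uniformity of $\|R_1\|,\|R_2\|$ on $\ov\cU_r$ (despite $|b|$ possibly reaching $\eta(r)/r\to\infty$) is equally essential, and relies on the Taylor expansion being performed in the rescaled variable $a=rb$ rather than in $b$ itself.
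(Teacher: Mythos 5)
Your proposal is correct and follows essentially the same route as the paper: the same product neighborhood $D$-ball of radius $\sim\eps/r$ times $N_\de$ (the paper uses a dyadic staircase $\eps_n=\eps_0/2^n$ where you calibrate a continuous $\eta(r)$ against $\min_{|a|=\eta(r)}|\nabla h(a)|$), the same homotopy decoupling the translation mode from the shape mode, the same compactness/contradiction argument on the non-compact boundary piece $\{v\in\pa N_\de\}$, and the same degree computation via (D4) and (D6) reducing $d_1^\nabla$ to $\deg(\nabla h,B_{\eps_0}(0),0)\cdot(\pm1)$. The only cosmetic difference is that the paper handles the boundary component $\{\|P_D u\|=\eps/r\}$ by the same contradiction argument (Lemma~\ref{lem:homotopy}) rather than by your quantitative lower bound, but both work.
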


In order to prove Proposition~\ref{prop:deg} we consider the homotopy $h:[0,1]\times\cD\to X^\ga$ defined by
\[
\begin{aligned}
  h(t,r,u) &:= L[u] - (\id-\De)^{-1}[\nabla H_0(u)] + (1-t)r(\id-\De)^{-1}[\nabla F(ru)]\\
           &\hspace{1cm} + trP_D\circ(\id-\De)^{-1}\big[\nabla F(rP_D[u])\big]
\end{aligned}
\]
where $P_D:X^\ga\to D$ is the orthogonal projection and $\cD:=\set{(r,u)\in\La^\ga:rP_D[u]\in\Omega^N}$. Observe that there exists $r_1>0$ such that
\[
\set{(r,u)\in (0,r_1]\times X^\gamma: \norm{P_D[u]}\leq \eps_0/r,\ u-P_D[u]\in\ov{N_\delta}\subset D^\perp\cap \Lambda^\gamma_0}\subset \cD.
\]
Note further that $h(t,r,.)$ is the gradient of an $S^1$-invariant function and $h(0,r,u)=\Phi_r(u)$.

\begin{Lem}\label{lem:homotopy}
  For every $0<\eps\le\eps_0$ there exists $0<r(\eps)\leq r_1$ with the following property: $h(t,r,u)\ne0$ for all $t\in[0,1]$, all $0<r\le r(\eps)$, and all $u\in A_{\eps,r}\cup B_{\eps,r}$ with
  \[
  A_{\eps,r}=\set{u\in X^\ga:\eps/r\le\norm{P_D[u]}\le\eps_0/r,\ u-P_D[u]\in N_\de}
  \]
  and
  \[
  B_{\eps,r}=\set{u\in X^\ga:\norm{P_D[u]}\le\eps_0/r,\ u-P_D[u]\in\pa N_\de}.
  \]
\end{Lem}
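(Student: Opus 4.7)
My plan is to argue by contradiction: suppose there is no such $r(\eps)$; then there exist sequences $t_n\in[0,1]$, $r_n\to 0$, and $u_n\in A_{\eps,r_n}\cup B_{\eps,r_n}$ with $h(t_n,r_n,u_n)=0$. I decompose $u_n=\wh{a_n}+v_n$, where $\wh{a_n}=P_D[u_n]\in D$ and $v_n\in D^\perp$, and set $b_n:=r_n a_n\in\R^2$. Since $v_n\in\ov{N_\de}$ is bounded in $X\hookrightarrow C^0$, the product $r_nv_n\to 0$ uniformly in $t$, and after passing to a subsequence $t_n\to t_\infty\in[0,1]$ and $b_n\to b_\infty$ with $|b_\infty|\le\eps_0/\sqrt{2\pi N}$.

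First I project the equation onto $D$. Because $\Range(L)\perp E_0\supset D$ and $H_0$ is invariant under simultaneous translation $z\mapsto z+\wh a$, so that $\sum_k\nabla_{z_k}H_0\equiv 0$, both $P_D Lu_n$ and $P_D(\id-\De)^{-1}[\nabla H_0(u_n)]$ vanish. Dividing the surviving equation by $r_n$ gives
\[
(1-t_n)\,P_D(\id-\De)^{-1}[\nabla F(r_nu_n)] + t_n\,P_D(\id-\De)^{-1}[\nabla F(\wh{b_n})]=0.
\]
A direct computation from $F(z)=\sum_{j,k}\Ga_j\Ga_k g(z_j,z_k)$ and $h(a)=g(a,a)$ yields $\sum_k\nabla_{z_k}F(\wh a)=\bigl(\sum_j\Ga_j\bigr)^2\nabla h(a)$. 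Using the uniform convergence $r_nu_n\to\wh{b_\infty}$ and the hypothesis $\sum_j\Ga_j\neq 0$, the limit equation reads $\nabla h(b_\infty)=0$. For $u_n\in A_{\eps,r_n}$ one has $|b_\infty|\ge\eps/\sqrt{2\pi N}>0$, which contradicts the choice of $\eps_0$ as a disk containing only the critical point $a_0=0$ of $h$.

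For sequences in $B_{\eps,r_n}$ the $D$-argument only forces $b_\infty=0$, so I then examine the $D^\perp$-component. Using $Lu_n=Lv_n$, $\nabla H_0(u_n)=\nabla H_0(v_n)$ and that the last term of $h$ lies in $D$, the equation $(\id-P_D)h(t_n,r_n,u_n)=0$ reduces to
\[
\Phi_0(v_n) + (1-t_n)\,r_n(\id-P_D)(\id-\De)^{-1}[\nabla F(r_nu_n)]=0,
\]
with the second term of size $O(r_n)$. The crucial step is to upgrade boundedness of $\{v_n\}\subset\ov{N_\de}$ in $X$ to strong $X$-convergence along a subsequence. Since $\ov{N_\de}$ stays uniformly away from the singular set of $H_0$, the compact embedding $H^1\hookrightarrow C^0$ yields $v_n\to v_\infty$ in $C^0$ on a subsequence, whence $\nabla H_0(v_n)\to\nabla H_0(v_\infty)$ in $L^2$, and the smoothing of $(\id-\De)^{-1}$ gives convergence of $(\id-\De)^{-1}[\nabla H_0(v_n)]$ in $Y=H^2$. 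Combined with the trivial convergence of the finite-dimensional piece $P_{E_0}v_n$, this produces convergence of $(L+P_{E_0})v_n$ in $Y$, and applying the isomorphism $(L+P_{E_0})^{-1}:Y\to X$ delivers $v_n\to v_\infty$ strongly in $X$.

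Passing to the limit then gives $\Phi_0(v_\infty)=0$ with $v_\infty\in\ov{N_\de}$. By the defining property \eqref{eq:def-delta} of $\de$, the only zeros of $\Phi_0$ in $\ov{N_\de}$ form the orbit $S^1*Z$, so $v_\infty\in S^1*Z$. But $v_n\in\pa N_\de$ forces $\dist(v_n,S^1*Z)=\de$, which by the strong convergence passes to $\dist(v_\infty,S^1*Z)=\de>0$, a contradiction. I expect the compactness step to be the main technical obstacle: one must exploit the regularization by $(\id-\De)^{-1}$ precisely enough to obtain convergence in $Y$ (not just $X$), since weak convergence of $v_n$ alone would not allow passage to the limit in $\Phi_0$, and since $v_n$ generally carries a nonzero component in $E_0\ominus D$ that cannot simply be projected away.
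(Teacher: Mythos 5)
Your proof is correct and follows essentially the same route as the paper's: argue by contradiction, project the equation onto $D$ and divide by $r_n$ to force $\nabla h(b_\infty)=0$ (hence $b_\infty=0$, which excludes $A_{\eps,r_n}$), and then use the smoothing of $(\id-\De)^{-1}$ to extract a strongly convergent subsequence $v_n\to v_\infty\in\pa N_\de$ with $\Phi_0(v_\infty)=0$, contradicting \eqref{eq:def-delta}. The only cosmetic difference is that you obtain the strong $X$-convergence of $v_n$ by first passing to a $C^0$-limit and feeding it back through the equation via the isomorphism $(L+P_0)^{-1}:Y\to X$, whereas the paper invokes directly the compactness of $P_0$ and $(L+P_0)^{-1}\circ(\id-\De)^{-1}$; both versions are sound.
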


\begin{proof}
Arguing by contradiction we assume that there exists $\eps\in(0,\eps_0]$, and sequences $0<r_n\to0$, $t_n\in[0,1]$, $u_n\in A_{\eps,r_n}\cup B_{\eps,r_n}$, such that $h(t_n,r_n,u_n)=0$. Let $v_n:=u_n-P_D[u_n]\in \ov{N_\de}\subset D^\perp$. Then, along a subsequence, $t_n\to t^*\in[0,1]$, $r_nP_D[u_n]\to \wh{c}\in D$ with $\norm{\wh{c}}\leq \eps_0$, $r_nv_n\to 0$ and thus $r_nu_n=r_nv_n+r_nP_D[u_n]\to \wh{c}$.
Since $H_0$ is invariant under translations with elements of $D$ and $P_D\circ (\id-\Delta)^{-1}=P_D$, we obtain from $h(t_n,r_n,u_n)=0$:
\[
	0=\frac{1}{r_n}P_D[h(t_n,r_n,u_n)]=(1-t_n)P_D[\nabla F(r_nu_n)]+t_nP_D[\nabla F(r_nP_D[u_n])]\to P_D[\nabla F(\wh{c})].
\]
A direct computation shows that
\[
\nabla_{z_j} F(\wh{c}) = \Ga_j\sum_{k=1}^N \Ga_k\nabla h(c)
\]
and thus
\[
0=P_D\big[\nabla F(\wh{c})\big] = \frac{1}N\left(\sum_{k=1}^{N}\Ga_k\right)^2\wh{\nabla h(c)}.
\]
By our assumption that $0$ is the only critical point of $h$ in $\ov{B_{\eps_0}(0)}$ we conclude $c=0$, hence $r_nu_n\to0$ and therefore $u_n\in B_{\eps,r_n}$, i.e. $v_n=u_n-P_D[u_n]\in\pa N_\de$.

Applying now $(L+P_0)^{-1}$ to the equation $h(t_n,r_n,u_n)=0$ and using again the invariance of $H_0$ under translations leads to
\[
\begin{aligned}
  0 &= u_n - P_0[u_n] - (L+P_0)^{-1}\circ(\id-\De)^{-1}[\nabla H_0(u_n)]+o(1)\\
    &= v_n - P_0[v_n] - (L+P_0)^{-1}\circ(\id-\De)^{-1}[\nabla H_0(v_n)]+o(1),
\end{aligned}
\]
which implies $v_n\to w\in\pa N_\de$ along a subsequence, due to the fact that $(v_n)_n\subset \pa N_\de$ is bounded and $P_0$, $(L+P_0)^{-1}\circ(\id-\De)^{-1}:X^\ga\to X^\ga$ are compact operators. Therefore we obtain
\[
\begin{aligned}
0 &= h(t_n,r_n,u_n) = L[u_n] - (\id-\De)^{-1}[\nabla H_0(u_n)] + o(1) \\
&= L[v_n] - (\id-\De)^{-1}[\nabla H_0(v_n)] + o(1)\to L[w] - (\id-\De)^{-1}[\nabla H_0(w)] = \Phi_0(w)
\end{aligned}
\]
contradicting the fact that $\Phi_0$ does not have zeroes in $\pa N_\de$.
\end{proof}

\begin{altproof}{Proposition~\ref{prop:deg}}
Using the notation from Lemma~\ref{lem:homotopy} we set $\eps_n:=\eps_0/2^n$, $r_0=r(\eps_0)$, and $r_n:=\min\set{r(\eps_n),r_{n-1}/2}$ for $n\ge1$. Now we define the set $\cU\subset\La\cap\big((0,r_0]\times X^\ga\big)$ as follows. For $r_{n+1}<r\le r_n$ let
\[
\cU_r:=\set{u\in X^\tau:\norm{P_D[u]}<\eps_n/r,\ u-P_D[u]\in N_\de}
\]
and define
\[
\cU:=\set{(r,u):0<r\le r_0,\ u\in\cU_r}\subset \cD.
\]
The properties (i)-(iv) of Proposition~\ref{prop:deg} are immediate consequences of the construction of $\cU$ and Lemma~\ref{lem:homotopy}. It also follows from Lemma~\ref{lem:homotopy} that for $0<r\le r_0$ the degree $\sdeg(\Phi_r,\cU_r)$ is well defined and equal to the degree $\sdeg(h(1,r,\,.\,),\cU_r)$ where
\[
h(1,r,u)=\Phi_0(u) + rP_D\circ(\id-\De)^{-1}[\nabla F(rP_D[u])]=\Phi_0(u) + rP_D[\nabla F(rP_D[u])].
\]
It remains to prove that $d_1^\nabla(h(1,r,\,.\,),\cU_r)\ne0$. Since $\fJ$ is invariant under translations with elements from $D$ it follows that $\Phi_0(u)=\nabla\fJ_0(u)\in D^\perp$, and $\Phi_0(u+\wh{b})=\Phi_0(u)$ for all $u\in N_\de$, all $\wh{b}\in D$. The second summand in the definition of $h(1,r,\,.\,)$ clearly satisfies $rP_D\big[\nabla F(rP_D[u])\big]\in D$. Hence the map $h(1,r,\,.\,)$ can be written as a product
\[
D^\perp\times D\supset N_\de\times D_r \to D^\perp\times D,\quad
 \big(v,\wh{b}\big) \mapsto \big(\Phi_0(v),rP_D\big[\nabla F(r\wh{b})\big]\big),
\]
where
\[
D_r:=\set{\wh{b}\in D: \|\wh{b}\|<\eps_0/r}.
\]
Therefore we can apply the multiplicativity property (D4) of $\sdeg$. In order to do this we first observe that
\[
d_1^\nabla\big(\Phi_0|_{D^\perp},N_\de\big) \ne 0
\]
as a consequence of (D6) because $S^1*Z\subset X_1$ is a non-degenerate orbit of zeroes of the restricition $\Phi_0|_{D^\perp}:\La_0^\ga\cap D^\perp\to D^\perp$, and because $N_\de$ does not contain other zeroes of $\Phi_0$ according to \eqref{eq:def-delta}.
Next we compute the degree $\sdeg(\psi_r,D_r)$ where
\[
\psi_r(\wh{b}) = rP_D\big[\nabla F(r\wh{b})\big]=\frac{r}N\left(\sum_{k=1}^{N}\Ga_k\right)^2\wh{\nabla h(rb)} \in D.
\]
Since $S^1$ acts trivially on $D$ only the component $d_0^\nabla(\psi_r,D_r)$ may be different from zero. This is simply the Brouwer degree $\deg\big(rP_D\big[\nabla F(r\,\,.\,)\big],D_r,0\big)=\deg(\nabla h,B_{\eps_0}(0),0)\ne 0$.
Now the multiplicativity property (D4) yields
\[
d^\nabla_1(h(1,r,\,.\,),\cU_r)
 = d_1^\nabla\big(\Phi_0|_{D^\perp},N_\de\big)\cdot d_0^\nabla(\psi_r,D_r) \ne 0.
\]
\end{altproof}

\section{Proof of Theorem~\ref{thm:main2}}\label{sec:proof}
As a consequence of Proposition~\ref{prop:deg} we can apply Theorem~\ref{thm:cont}. This gives a continuum $\cC=\cC^-\cup\cC^+\subset\R^+\times X^\ga$ of solutions $(r,u)$ of the equation $\Phi_r(u)=0$. Property a) of Theorem~\ref{thm:main1} holds by construction. The neighborhood $\cU$ in property b) of Theorem~\ref{thm:main1} is, of course, the one constructed in Proposition~\ref{prop:deg}. Given a sequence $(r_n,u_n)$ as in Theorem~\ref{thm:main1}~b) we claim that $r_nP_D[u_n]\to0$ and $v_n\to S^1*Z$ as $n\to\infty$. The first claim follows from Proposition~\ref{prop:deg}~(iv). For the second claim we apply $(L+P_0)^{-1}$ to the equation $\Phi_{r_n}(u_n)=0$ and obtain
\[
\begin{aligned}
0 &= (L+P_0)^{-1}\big(\Phi_{r_n}(u_n)\big)\\
  &= u_n - P_0[u_n] - (L+P_0)^{-1}\circ(\id-\De)^{-1}\big[\nabla H_0(u_n)\big]+o(1)\\
  &= v_n - P_0[v_n] - (L+P_0)^{-1}\circ(\id-\De)^{-1}\big[\nabla H_0(v_n)\big]+o(1).
\end{aligned}
\]
As in the proof of Lemma~\ref{lem:homotopy} one sees that $v_n \to v\in \ov{N_\de}\subset D^\perp$ along a subsequence, and $v$ solves $\Phi_0(v)=0$. This implies $v\in S^1*Z$ as claimed, so property b) of Theorem~\ref{thm:main1} holds. Next property c) of Theorem~\ref{thm:main1} corresponds to Theorem~\ref{thm:cont}~a). Property d) is a consequence of the fact that $\cC^+$ is not contained in a compact subset of $\La^\ga$, and lemma \ref{lem:homotopy}.

It remains to proof e). Therefore assume that $a_0=0$ is a non-degenerate critical point of $h$. We consider the auxiliary map $\phi:\Lambda^\gamma\rightarrow X^\gamma$ defined by
\[
\phi(r,u)
 = \begin{cases}
	 (\id-P_D)[\Phi_r(u)]+\frac{1}{r^2}P_D[\Phi_r(u)],&r\neq 0\\
	 \Phi_0(u)+P_D[F''(0)u],&r=0.
   \end{cases}
\]
This has the same zeroes as $\Phi$ in $\La^\ga\setminus(\{0\}\times X^\ga)$. Since $H_0$ is invariant under translations there holds $\phi(r,u)=\Phi_0(u)+r(\id-P_D)\circ (\id-\Delta)^{-1}[\nabla F(ru)]+\frac{1}{r}P_D[\nabla F(ru)]$  for $r\neq 0$. Thus we deduce that $\phi$ is continuous because $F$ is $\cC^2$ and $\nabla F(0)=0$. Observe also that $\phi$ is differentiable with respect to $u$ and that $D_u\phi$ is continuous. We have $\phi(0,Z)=0$ and the $u$-derivative at $(0,Z)$ is given by $T:=D_u\phi(0,Z)=D\Phi_0(Z)+P_D\circ F''(0)$. Again the invariance of $H_0$ under translations implies $v\in\Kern(T)$ if and only if $v\in D\oplus \R\dot{Z}$ and $P_D[F''(0)v]=0$. However, for $v=\hat{a}+\lambda\dot{Z}$ there holds:
\[
P_D[F''(0)v]=P_D[F''(0)\hat{a}]=\frac{1}N\left(\sum_{k=1}^{N}\Ga_k\right)^2\wh{ h''(0)a},
\]
hence $\Kern(T)=\R\dot{Z}$. Now $(L+P_0)^{-1}\circ T$ induces an isomorphism between the Banach spaces $(\R\dot{Z})^\perp$ and $\Range((L+P_0)^{-1}\circ T)=(\R (L+P_0)\dot{Z})^\perp=:R$. Therefore e) follows from \ref{thm:main1} b), \ref{thm:main1} c) and the implicit function theorem applied to the map
\[
P_R\circ(L+P_0)^{-1}\circ \phi:\Lambda^\gamma\cap(\R\times(\R\dot{Z})^\perp)\rightarrow R,
\]
making $r_0$ smaller if necessary. Here $P_R:X^\gamma\rightarrow R$ is the orthogonal projection.

\section{Examples}\label{sec:ex}
Let $Z(t)=e^{-\om J_N t}z$, $z\in\R^{2N}$ fix, be a rigidly rotating solution of \eqref{eq:R2}. In order to prove that $Z$ is non-degenerate we need to consider the so called stability matrix
$$
A = J_N(M_\Ga^{-1}H_0''(z)+\om\cdot\id)\in\R^{2N\times 2N}.
$$
Then according to Remark~\ref{rem:main1}~c) $Z$ is a non-degenerate relative equilibrium provided the linear system
\beq[R2linrot]
\dot{w}=Aw
\eeq
has only 3 linear independent $\frac{2\pi}{\abs{\om}}$-periodic solutions. In order to check this for concrete examples we shall use results of Roberts \cite{Roberts:2013}, who studied the linear stability of relative equilibria and therefore investigated the spectrum of $A$. For the convenience of the reader we recall Lemma~2.4 and some consequences from \cite{Roberts:2013}. For $v\in\R^{2N}$ we use the notation $E_v:=\span\set{v,J_Nv}\subset\R^{2N}$.

\begin{Lem}\label{lem:roberts}
\begin{itemize}
\item[a)] Let $\wh{e_1},\wh{e_2}\in D$ be the standard basis of $D\subset\R^{2N}$. The spaces $E_z$ and $D$ are invariant subspaces of $A$. The representation of $A$ in the basis $(z,J_Nz,\wh{e_1},J_N\wh{e_1})$ of the direct sum $E_z\oplus D$ is given by
    \[
    A=\begin{pmatrix}
    0&0&0&0\\
    2\om&0&0&0\\
    0&0&0&-\om\\
    0&0&\om&0
    \end{pmatrix}.
    \]
\item[b)] Suppose $v$ is a real eigenvector of $M_\Ga^{-1}H_0''(z)$ with eigenvalue $\mu$. Then $E_v$ is an invariant subspace of $A$, on which $A$ is represented by
    \[
    \begin{pmatrix}
    0& \mu-\om\\
    \mu+\om&0
    \end{pmatrix}.
    \]
\item[c)] Suppose $v=v_1+iv_2$ is a complex eigenvector of $M_\Ga^{-1}H_0''(z)$ with eigenvalue $\mu=\xi+i\eta$. Then $\span\set{v_1,v_2,J_Nv_1,J_Nv_2}\subset\R^{2N}$ is a real invariant subspace of $A$, on which $A$ is represented by
    $$
    \begin{pmatrix}
    0 & 0 & \xi-\om & \eta\\
    0 & 0 & -\eta & \xi-\om\\
    \xi+\om & \eta & 0 & 0\\
    -\eta & \xi+\om & 0 & 0
    \end{pmatrix}.
    $$
\end{itemize}
\end{Lem}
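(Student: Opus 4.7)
The plan is to deduce all three parts from a single structural observation: the Hessian $H_0''(z)$ anticommutes with $J_N$, that is,
\[
H_0''(z)J_N = -J_N H_0''(z).
\]
Since $M_\Ga$ commutes with $J_N$, the same anticommutation holds for $M := M_\Ga^{-1}H_0''(z)$, and this reduces every claim to elementary linear algebra for $A = J_N(M + \om\cdot\id)$.

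To establish the anticommutation I would argue block by block. Each $2\times 2$ block of $H_0''(z)$ is a linear combination of Hessians $\nabla^2\log\abs{z_j-z_k}$, and such a Hessian has the form $\abs{z_j-z_k}^{-2}(I - 2\hat v\hat v^T)$ with $\hat v = (z_j-z_k)/\abs{z_j-z_k}$, i.e.\ a (scaled) reflection. A short direct check shows that any reflection in $\R^2$ anticommutes with the rotation generator $J$, which gives the claim. Next I would collect three identities for $M$: (i)~the equilibrium equation for $Z(t) = e^{-\om J_N t}z$ gives $\nabla H_0(z) = -\om M_\Ga z$; (ii)~from $H_0(\la z) = H_0(z) + c\log\la$ (with $c = -\frac{1}{2\pi}\sum_{j\ne k}\Ga_j\Ga_k$), Euler's identity plus one further $z$-differentiation yields $H_0''(z)z = -\nabla H_0(z) = \om M_\Ga z$, so $Mz = \om z$ and hence $M(J_Nz) = -\om J_Nz$ by the anticommutation; (iii)~translation invariance in the two coordinate directions yields $H_0''(z)\wh{e_i} = 0$, so $M$ annihilates $D$. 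Plugging these into $A = J_N(M+\om\cdot\id)$ shows that $E_z$ and $D$ are invariant and produces the matrix of part~(a) by inspection, e.g.\ $Az = J_N(2\om z) = 2\om J_Nz$ while $A(J_Nz) = J_N(-\om J_Nz + \om J_Nz) = 0$.

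Part~(b) then follows at once: if $Mv = \mu v$, the anticommutation gives $M(J_Nv) = -\mu J_Nv$, so $E_v = \span\set{v,J_Nv}$ is $M$-invariant (hence $A$-invariant), and a one-line computation yields the stated $2\times 2$ representation of $A|_{E_v}$. For part~(c) I would write $v = v_1 + iv_2$, $\mu = \xi+i\eta$, separate the real and imaginary parts of $Mv = \mu v$ into $Mv_1 = \xi v_1 - \eta v_2$ and $Mv_2 = \eta v_1 + \xi v_2$, and then use the anticommutation to compute $M$ on $J_Nv_1$ and $J_Nv_2$. The four vectors $v_1, v_2, J_Nv_1, J_Nv_2$ then span an $M$-invariant subspace, and a direct evaluation of $A = J_N(M+\om\cdot\id)$ on this basis yields the claimed $4\times 4$ representation.

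The main obstacle is establishing the anticommutation identity $H_0''(z)J_N = -J_N H_0''(z)$; it is not immediate a priori but reduces to the elementary fact that reflections and $\pi/2$-rotations in $\R^2$ anticommute. Once this is in hand, every part of the lemma is routine book-keeping, which is consistent with the fact that the statement is merely being recalled from \cite{Roberts:2013}.
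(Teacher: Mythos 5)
Your proposal is correct. The paper itself gives no proof of this lemma --- it is explicitly recalled from Roberts \cite{Roberts:2013} --- and your self-contained argument reconstructs exactly the mechanism underlying Roberts' Lemma~2.4: every $2\times2$ block of $H_0''(z)$ is a linear combination of matrices $\abs{w}^{-2}(I-2\hat w\hat w^T)$, each of which anticommutes with $J$ (one checks $\hat w\hat w^TJ+J\hat w\hat w^T=J$, so $(I-2\hat w\hat w^T)J+J(I-2\hat w\hat w^T)=0$), whence $M:=M_\Ga^{-1}H_0''(z)$ anticommutes with $J_N$; combined with $Mz=\om z$ (homogeneity), $MD=0$ (translation invariance) and the equilibrium relation $\nabla H_0(z)=-\om M_\Ga z$, the three matrix representations follow by direct evaluation of $A=J_N(M+\om\cdot\id)$ on the given bases, and your computed columns agree with the stated matrices in all three parts.
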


Note that the Hamiltonian in \cite{Roberts:2013} differs by a factor of $\pi^{-1}$ from $H_0$ but the corresponding stability matrices coincide, when translating the solution of one system to the other.

\begin{Ex}\label{ex:N=2}
\rm
Let $N=2$ and $\Ga_1,\Ga_2\ne0$ with $\Ga:=\Ga_1+\Ga_2\ne0$. Any initial position $z_1,z_2$ of the two point vortices gives a relative equilibrium solution of \eqref{eq:R2} (see e.g.\ \cite{Newton:2001}). Via translation we can assume that they rotate rigidly around the origin with frequency $\om=\frac{\Ga}{\pi|z_1-z_2|^2}\ne0$. Due to Lemma~\ref{lem:roberts} the stability matrix $A\in\R^{4\times 4}$ of any such solution is given (in a suitable basis) by
\[
A=\begin{pmatrix}
0&0&0&0\\
2\om&0&0&0\\
0&0&0&-\om\\
0&0&\om&0
\end{pmatrix}.
\]
The linear system \eqref{eq:R2linrot} then possesses exactly $3$ linearly independent $\frac{2\pi}{\abs{\om}}$-periodic solutions. This explains Example \ref{ex:2vortices}.
\end{Ex}

\begin{Ex}\label{ex:N=3}
\rm
Now we consider $N=3$ vortices with vortex strengths, $\Ga_1,\Ga_2,\Ga_3\ne0$, and such that $\Ga:=\Ga_1+\Ga_2+\Ga_3\ne0$. Then every equilateral triangle configuration $z_1,z_2,z_3$ is a relative equilibrium solution of the $3$-vortex problem \eqref{eq:R2} (see \cite[Section~2.2]{Newton:2001}). Let $Z(t)=e^{-\om J_3 t}z$ be an equilateral triangle configuration rotating around the origin. The corresponding stability matrix $A$ is a $6\times6$ matrix. In \cite{Roberts:2013} Roberts computed its eigenvalues explicitly in the case when $\om=\Ga/3$; this can always be achieved by a suitable scaling. He showed that in addition to the eigenvalues $0,0,\pm i\om$ of the block in \ref{lem:roberts}a) there are two more eigenvalues given by $\pm\sqrt{\frac{-L}{3}}$, where $L=\Ga_1\Ga_2+\Ga_1\Ga_3+\Ga_2\Ga_3$ is the total vortex angular momentum. Hence the linear system \eqref{eq:R2linrot} has more than 3 linearly independent $\frac{2\pi}{\abs{\om}}$-periodic solutions if $L>0$ and $\sqrt{L/3}\in\om\Z=\frac{\Ga}{3}\Z$, hence if there exists $k\in\Z$ with
\[
3L = k^2\Ga^2 = k^2\left(\Ga_1^2+\Ga_2^2+\Ga_3^2+2L\right).
\]
This is only possible if $k^2=1$ and $L=\Ga_1^2+\Ga_2^2+\Ga_3^2$. Therefore the equilateral triangle configuration is non-degenerate provided $\Ga\ne0$, $L\ne0$ and $L\ne\Ga_1^2+\Ga_2^2+\Ga_3^2$. This result, which is independent of the particular equilateral triangle configuration considered in \cite{Roberts:2013}, has been stated in Example \ref{ex:3vortices}.
\end{Ex}

\begin{Ex}\label{ex:NGon-2}
\rm
Here we consider the case of $N$ identical vortices and assume without loss of generality $\Ga_1=\ldots=\Ga_N=1$. Placing these on the vertices of a regular $N$-gon one obtains a relative equilibrium solution of \eqref{eq:R2}, the famous solution of Thomson. Setting $Z_1(t):=e^{-Jt}e_1\in\R^2$ and $Z_k(t):=Z(t+\frac{2(k-1)\pi}{N})$ for $k=2,\dots,N$, the function $Z=(Z_1,\dots,Z_N)$ solves \eqref{eq:R2}. Note that this solution is a choreography, i.e.\ for $\si:=(1~2~\ldots~N)\in\Sigma_N$ and $\ga:=(\si,\frac{2\pi}{N})\in\Si_N\times S^1$ one has $\ga*Z=Z$. In \cite[Lemma-4.1]{Bartsch-Dai:2016} it was proved that the kernel of $\fJ''_0(Z)$ in $X^\ga$ has dimension $3$, hence $Z$ is $\ga$-non-degenerate..
\end{Ex}

{\sc Address of the authors:}\\[1em]
\parbox{8cm}{Thomas Bartsch, Bj\"orn Gebhard\\
 Mathematisches Institut\\
 Universit\"at Giessen\\
 Arndtstr.\ 2\\
 35392 Giessen\\
 Germany\\
 Thomas.Bartsch@math.uni-giessen.de\\
 Bjoern.Gebhard@math.uni-giessen.de}

\end{document}